\def\omathop#1#2#3{\let\temp=#1\def\letter{#2}
  \ifcat#3_ \let\next\@@olim\else\let\next\@olim\fi\next#3}
\def\@olim{\letter\text{-}\!\temp}
\def\@@olim_#1{\mathchoice{
   \setbox0=\hbox{$\displaystyle\letter\text{-}\!\temp\!\text{-}\letter$}
   \setbox2=\hbox{$\displaystyle\temp$}
   \setbox4=\hbox{$\scriptstyle#1$}
   \dimen@=\wd4 \advance\dimen@ by -\wd2 \divide\dimen@ by2
   \def\next{\letter\text{-}\!\temp_{\hbox to 0pt{\hss$\scriptstyle#1$\hss}}
     \hskip\dimen@}
   \ifdim\wd2>\wd4 \def\next{\@olim_{#1}}\fi
   \ifdim\wd4>\wd0 \def\next{\mathop{\llap{$\letter$-}\!\temp}\limits_{#1}}\fi
   \next}
   {\@olim_{#1}}{\@olim_{#1}}{\@olim_{#1}}}
\def\bolim{\omathop{\lim}{bo}}
\newcommand{\reduce}{\mskip-2mu}
\newcommand{\ls}{\reduce\left\bracevert\reduce\vphantom{X}}
\newcommand{\rs}{\reduce\vphantom{X}\reduce\right\bracevert\reduce}
\theoremstyle{plain}
\newtheorem{thm}{Theorem}[section]
\newtheorem{prop}[thm]{Proposition}
\newtheorem{lemma}[thm]{Lemma}
\newtheorem{rem}[thm]{Remark}
\newtheorem{example}[thm]{Example}
\theoremstyle{definition}
\newtheorem{definition}[thm]{Definition}
\numberwithin{equation}{section}
\begin{document}

\title{Narrow orthogonally additive operators on lattice-normed spaces}

\author{Xiaochun Fang}

\address{Department of Mathematics, Tongji University \\
Shanghai 200092, CHINA}

\email{xfang\copyright tongji.edu.cn}

\author{Marat Pliev}

\address{South Mathematical Institute of the Russian Academy of Sciences\\
Vladikavkaz, 362027 Russia}

\email{maratpliev\copyright gmail.com}


\keywords{Orthogonally additive operators, narrow operators, $C$-compact operators, dominated Uryson operators, lattice-normed spaces, Banach lattices}

\subjclass[2010]{Primary 46B99; Secondary 47B99.}

\begin{abstract}
The aim of this article is to extend results of M.~Popov and second named author  about orthogonally additive narrow operators on vector lattices. The main object of our investigations are an orthogonally additive narrow operators between  lattice-normed spaces. We prove that every $C$-compact laterally-to-norm continuous orthogonally additive  operator from a Banach-Kantorovich space $V$ to a Banach lattice $Y$ is narrow. We also show that every dominated Uryson operator from Banach-Kantorovich space over an atomless Dedekind complete vector lattice $E$ to a sequence Banach lattice $\ell_p(\Gamma)$ or $c_0(\Gamma)$ is narrow. Finally,   we  prove   that if an  orthogonally additive dominated operator $T$ from lattice-normed space $(V,E)$ to Banach-Kantorovich space $(W,F)$ is order narrow then the order narrow   is its  exact dominant $\ls T\rs$.
\end{abstract}

\maketitle


\section{Introduction}

Narrow operators in framework of the theory of ordered spaces were introduces in \cite{MMP}. Today the theory of linear narrow operators is a very active area of Functional Analysis (see \cite{PR}). Firstly, this class of  operators in the framework of the theory of vector lattices was introduced in \cite{MMP}. In \cite{PP} Pliev and Popov have considered a wide class of narrow nonlinear maps which called orthogonally additive operators. This class of operators acting between vector lattices was introduced and studied in 1990 by Maz\'{o}n and Segura de Le\'{o}n \cite{Maz-1,Maz-2}, and then extended to lattice-normed spaces by Kusraev and the second named author \cite{Ku-1,Ku-2,Pl-3}. In the present paper we generalize the main results of \cite{PP} on orthogonally additive  narrow operators on vector lattices to a wider class which includes nonlinear  orthogonally additive operators between vector-valued function spaces. We consider orthogonally additive narrow operators in the framework of lattice-normed spaces. The notion of a lattice-normed space was firstly introduced by Kantorovich in the first part of 20th century \cite{Ka}. Later, Kusraev and his school had provided a deep theory. Orthogonally additive operators on lattice-normed spaces were investigated by Pliev and collaborates in \cite{Ben, Get, Ku-1,Ku-2, Pl-3,Pl-4,Pl-5,PP-1}. \footnote{The    second  named author was  supported by the  Russian Foundation of Basic Research, the grant number 15-51-53119}

\section{Preliminaries}

The  goal of this section is to introduce some basic definitions and facts. General information on vector lattices,
Banach spaces and lattice-normed spaces the reader can find in the books \cite{Al,Ku,Li-1,Li-2,LZ}.

Consider a vector space $V$ and a real  archimedean vector lattice
$E$. A map $\ls \cdot\rs:V\rightarrow E$ is a \textit{vector norm} if it satisfies the following axioms:
\begin{enumerate}
  \item[1)] $\ls v \rs\geq 0;$\,\, $\ls v\rs=0\Leftrightarrow v=0$;\,\,$(\forall v\in V)$.
  \item[2)] $\ls v_1+v_2 \rs\leq \ls v_1\rs+\ls v_2 \rs;\,\, ( v_1,v_2\in V)$.
  \item[3)] $\ls\lambda  v\rs=|\lambda|\ls v\rs;\,\, (\lambda\in\Bbb{R},\,v\in V)$.
\end{enumerate}
A vector norm is called \textit{decomposable} if
\begin{enumerate}
  \item[4)] for all $e_{1},e_{2}\in E_{+}$ and $x\in V$ from $\ls x\rs=e_{1}+e_{2}$ it follows that there exist $x_{1},x_{2}\in V$ such that $x=x_{1}+x_{2}$ and $\ls x_{k}\rs=e_{k}$, $(k:=1,2)$.
\end{enumerate}

A triple $(V,\ls\cdot\rs,E)$ (in brief $(V,E),(V,\ls\cdot\rs)$ or $V$ with default parameters omitted) is a \textit{lattice-normed space} if $\ls\cdot\rs$ is a $E$-valued vector norm in the vector space $V$. If the norm $\ls\cdot\rs$ is decomposable then the space $V$ itself is called decomposable. We say that a net $(v_{\alpha})_{\alpha\in\Delta}$ {\it $(bo)$-converges} to an element $v\in V$ and write $v=\bolim v_{\alpha}$ if there exists a decreasing net $(e_{\gamma})_{\gamma\in\Gamma}$ in $E_{+}$ such that $\inf_{\gamma\in\Gamma}(e_{\gamma})=0$ and for every $\gamma\in\Gamma$ there is an index $\alpha(\gamma)\in\Delta$ such that $\ls v-v_{\alpha(\gamma)}\rs\leq e_{\gamma}$ for all $\alpha\geq\alpha(\gamma)$. A net $(v_{\alpha})_{\alpha\in\Delta}$ is called \textit{$(bo)$-fundamental} if the net $(v_{\alpha}-v_{\beta})_{(\alpha,\beta)\in\Delta\times\Delta}$ $(bo)$-converges to zero. A lattice-normed space is called {\it $(bo)$-complete} if every $(bo)$-fundamental net $(bo)$-converges to an element of this space. Let $e$ be a positive element of a vector lattice $E$.
By $[0,e]$ we denote the set $\{v\in V:\,\ls v\rs\leq e\}$.
A set $M\subset V$ is called  $\text{(bo)}$-{\it bounded } if there exists
$e\in E_{+}$ such that  $M\subset[0,e]$. Every decomposable $(bo)$-complete lattice-normed space is called a {\it Banach-Kantorovich space} (a BKS for short).

Let $(V,E)$ be a lattice-normed space.  A subspace $V_{0}$ of $V$ is called a $\text{(bo)}$-ideal of $V$ if for $v\in V$ and $u\in V_{0}$, from $\ls v\rs\leq\ls u\rs$ it follows that $v\in V_{0}$. A subspace $V_{0}$ of a decomposable lattice-normed space $V$  is a $\text{(bo)}$-ideal if and only if $V_{0}=\{v\in V:\,\ls v\rs\in L\}$, where $L$ is an order ideal in $E$ (\cite{Ku}, Prop.~2.1.6.1). Let $V$ be a lattice-normed space and $y,x\in V$. If $\ls x\rs\bot\ls y\rs=0$ then we call the elements $x,y$ {\it disjoint} and write $x\bot y$. The equality $x=\coprod_{i=1}^{n}x_{i}$ means that $x=\sum\limits_{i=1}^{n}x_{i}$ and $x_{i}\bot x_{j}$ if $i\neq j$. An element $z\in V$ is called a {\it component} or a \textit{fragment} of $x\in V$ if $0\leq \ls z\rs\leq\ls x\rs$ and $x\bot(x-z)$. Two fragments $x_{1},x_{2}$ of $x$  are called \textit{mutually complemented} or $MC$, in short, if $x=x_1+x_{2}$. The notations $z\sqsubseteq x$ means that $z$ is a fragment of $x$. According to (\cite{Al}, p.111) an element $e>0$ of a vector lattice $E$ is called an {\it atom}, whenever $0\leq f_{1}\leq e$, $0\leq f_{2}\leq e$ and $f_{1}\bot f_{2}$ imply that either $f_{1}=0$ or $f_{2}=0$. A vector lattice $E$ is  atomless if there is no atom $e\in E$.

The following object will be often used in different constructions below. Let $V$ be a lattice-normed space and $x\in V$. A sequence $(x_{n})_{n=1}^{\infty}$ is called a {\it disjoint tree} on $x$ if $x_{1}=x$ and $x_{n}=x_{2n}\coprod x_{2n+1}$ for each $n\in\Bbb{N}$. It is clear that all $x_{n}$ are fragments of $x$. All lattice-normed spaces below we consider to be decomposable.

Consider some important examples of lattice-normed spaces. We begin with simple extreme cases, namely vector lattices and normed spaces. If $V=E$ then the modules of an element can be taken as its lattice norm: $\ls v\rs:=|v|=v\vee(-v);\,v\in E$. Decomposability of this norm easily follows from the Riesz Decomposition Property holding in every vector lattice. If $E=\Bbb{R}$ then $V$ is a normed space.

Let $Q$ be a compact  and let $X$ be a Banach space. Let $V:=C(Q,X)$ be the space of  continuous vector-valued functions from $Q$ to $X$. Assign $E:=C(Q,\Bbb{R})$. Given $f\in V$, we define its lattice norm by the relation $\ls f\rs:t\mapsto\|f(t)\|_{X}\,(t\in Q)$. Then $\ls\cdot\rs$ is a decomposable norm (\cite{Ku}, Lemma~2.3.2).

Let $(\Omega,\Sigma,\mu)$ be a $\sigma$-finite measure space,
let $E$ be an order-dense ideal in  $L_{0}(\Omega)$ and let $X$ be a Banach space.
By $L_{0}(\Omega,X)$ we denote the space of (equivalence classes ) of  Bochner $\mu$-measurable vector functions
acting from $\Omega$ to $X$. As usual, vector-functions are equivalent if they have
equal values at almost all points of the set $\Omega$. If $\widetilde{f}$ is the coset of a measurable vector-function $f:\Omega\rightarrow X$ then $t\mapsto\|f(t)\|$,$(t\in\Omega)$ is a scalar measurable function whose coset is denoted by the symbol $\ls\widetilde{f}\rs\in L_{0}(\mu)$. Assign by definition
$$
E(X):=\{f\in L_{0}(\mu,X):\,\ls f\rs\in E\}.
$$
Then $(E(X),E)$ is a lattice-normed space with a decomposable norm (\cite{Ku}, Lemma~2.3.7).
If $E$ is a Banach lattice then the lattice-normed space $E(X)$ is  a Banach space with respect to the norm $|\|f|\|:=\|\|f(\cdot)\|_{X}\|_{E}$.

Let $E$ be a Banach lattice and let $(V,E)$ be a lattice-normed space. By definition,
$\ls x\rs\in E_{+}$ for every $x\in V$, and we can introduce some \textit{mixed norm} in $V$ by the formula
$$
\||x|\|:=\|\ls x\rs\|\,\,\,(\forall\, x\in V).
$$
The normed space $(V,\||\cdot|\|)$ is called a \textit{space with a mixed norm}.
In view of the inequality $|\ls x\rs-\ls y\rs|\leq\ls x-y\rs$ and monotonicity of the norm in $E$, we have
$$
\|\ls x\rs-\ls y\rs\|\leq\||x-y|\|\,\,\,(\forall\, x,y\in V),
$$
so a vector norm is a norm continuous operator from $(V,\||\cdot|\|)$ to $E$. A lattice-normed space $(V,E)$ is called
a \textit{Banach space with a mixed norm} if the normed space $(V,\||\cdot|\|)$ is complete with respect to the norm convergence.

\begin{definition} \label{def:ddmjf0}
Let $E$ be a vector lattice, and let $F$ be a real linear space. An operator $T:E\rightarrow F$ is called \textit{orthogonally additive} if $T(x+y)=T(x)+T(y)$ whenever $x,y\in E$ are disjoint.
\end{definition}

It follows from the definition that $T(0)=0$. It is immediate that the set of all orthogonally additive operators is a real vector space with respect to the natural linear operations.

\begin{definition}
Let $E$ and $F$ be vector lattices. An orthogonally additive operator $T:E\rightarrow F$ is called:
\begin{itemize}
  \item \textit{positive} if $Tx \geq 0$ holds in $F$ for all $x \in E$;
  \item \textit{order bounded} if $T$ maps order bounded sets in $E$ to order bounded sets in $F$.
\end{itemize}
An orthogonally additive, order bounded operator $T:E\rightarrow F$ is called an \textit{abstract Uryson} operator.
\end{definition}
For example, any linear  operator $T\in L_{+}(E,F)$ defines a positive abstract Uryson operator by $G (f) = T |f|$ for each $f \in E$.

The set of all abstract Uryson operators from $E$ to $F$ we denote by $\mathcal{U}(E,F)$. Consider some examples.
The most famous one is the nonlinear integral Uryson operator.

Consider the following order in $\mathcal{U}(E,F):S\leq T$ whenever $T-S$ is a positive operator. Then $\mathcal{U}(E,F)$
becomes an ordered vector space.  If a vector lattice $F$ is Dedekind complete we have the following theorem.
\begin{thm}(\cite{Maz-1},Theorem~3.2)\label{th-1}.
Let $E$ and $F$ be a vector lattices, $F$ Dedekind complete. Then $\mathcal{U}(E,F)$ is a Dedekind complete vector lattice. Moreover for $S,T\in \mathcal{U}(E,F)$ and for $f\in E$ following hold
\begin{enumerate}
\item~$(T\vee S)(f):=\sup\{Tg+Sh:\,f=g\sqcup h\}$.
\item~$(T\wedge S)(f):=\inf\{Tg+Sh:\,f=g\sqcup h\}.$
\item~$(T)^{+}(f):=\sup\{Tg:\,g\sqsubseteq f\}$.
\item~$(T)^{-}(f):=-\inf\{Tg:\,g;\,\,g\sqsubseteq f\}$.
\item~$|Tf|\leq|T|(f)$.
\end{enumerate}
\end{thm}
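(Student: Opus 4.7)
The plan is to adapt the classical Riesz--Kantorovich construction to the orthogonally additive setting. For $T, S \in \mathcal{U}(E,F)$, I would define
\[
R(f) := \sup\{Tg + Sh : f = g \sqcup h\}
\]
and show $R \in \mathcal{U}(E,F)$ with $R = T \vee S$. For each fixed $f$, any decomposition $f = g \sqcup h$ has $|g|, |h| \leq |f|$; order boundedness of $T$ and $S$ then confines $\{Tg + Sh\}$ to an order interval in $F$, and Dedekind completeness of $F$ produces the supremum. Applying the same bound uniformly on any order interval in $E$ shows $R$ is order bounded.

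The crucial step is orthogonal additivity of $R$, for which I would establish the Riesz-space lemma: if $f_1 \perp f_2$, then the fragments of $f_1 + f_2$ are exactly the sums $g_1 + g_2$ with $g_i \sqsubseteq f_i$, and the decomposition is unique. In the positive case this follows by setting $g_i := g \wedge f_i$ and applying the Riesz Decomposition Property to $g \leq f_1 + f_2$, then verifying $(g \wedge f_i) \perp (f_i - g \wedge f_i)$ using $g \perp (f-g)$ and Birkhoff-type inequalities. The signed case reduces to the positive one by splitting $f_1, f_2, g$ into positive and negative parts, which remain pairwise disjoint since $f_1 \perp f_2$. Granting the lemma, decompositions $f_1 + f_2 = g \sqcup h$ are in bijection with pairs of decompositions $f_i = g_i \sqcup h_i$; orthogonal additivity of $T, S$ rewrites $Tg + Sh$ as $(Tg_1 + Sh_1) + (Tg_2 + Sh_2)$, and taking suprema factor by factor yields $R(f_1 + f_2) = R(f_1) + R(f_2)$. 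The identification $R = T \vee S$ then follows from $R \geq T$ (take $g=f, h=0$), $R \geq S$ (symmetric), and for any $U \in \mathcal{U}(E,F)$ with $U \geq T, S$, orthogonal additivity of $U$ gives $Tg + Sh \leq U(g+h) = Uf$, hence $R \leq U$.

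Item (2) follows from $T \wedge S = -((-T) \vee (-S))$, while (3) and (4) specialize (1) to $S = 0$ and, for the negative part, apply the formula to $-T$. The inequality $|Tf| \leq |T|(f)$ comes from $|T| = T \vee (-T)$ via the choices $(g,h) = (f,0)$ and $(g,h) = (0,f)$ in the supremum formula, giving $|T|(f) \geq Tf \vee (-Tf) = |Tf|$. For Dedekind completeness, given an upward directed family $\{T_\alpha\}$ bounded above by $T_0$, fix any $\alpha_0$ and set $T(f) := \sup_\alpha T_\alpha(f)$; the sup exists in $F$ by Dedekind completeness and the pointwise bounds $T_{\alpha_0}(f) \leq T_\alpha(f) \leq T_0(f)$. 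Orthogonal additivity of $T$ follows from the identity $\sup_\alpha(a_\alpha + b_\alpha) = \sup_\alpha a_\alpha + \sup_\alpha b_\alpha$ valid for upward directed nets, while order boundedness follows from the sandwich $T_{\alpha_0} \leq T \leq T_0$. The main technical obstacle I anticipate is the fragment decomposition lemma, particularly extending from the positive case to signed elements in a vector lattice that need not be Dedekind complete.
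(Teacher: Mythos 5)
This theorem is not proved in the paper: it is quoted verbatim from Maz\'{o}n and Segura de Le\'{o}n (\cite{Maz-1}, Theorem~3.2), and your reconstruction follows essentially the same route as that reference, namely the Riesz--Kantorovich formulas adapted to the lateral (fragment) structure. Your argument is correct, including the one genuinely delicate point --- that for $f_1\bot f_2$ the fragments of $f_1+f_2$ are exactly the sums $g_1+g_2$ with $g_i\sqsubseteq f_i$, uniquely so --- which you identify and handle in the standard way.
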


\begin{definition} \label{def:even}
Let $E$ be a vector lattice  and $X$ a vector space. An orthogonally additive map $T:E\to X$ is called  even if $T(x)=T(-x)$ for every $x\in E$. If $E,F$ are vector lattices, the set of all even abstract Uryson operators from $E$ to $F$ we denote by $\mathcal{U}^{ev}(E,F)$.
\end{definition}
If $E,F$ are vector lattices with $F$ Dedekind complete, the space $\mathcal{U}^{ev}(E,F)$ is not empty. Indeed, for every $T\in\mathcal{U}(E,F)$ by (\cite{Maz-1},Proposition~3.4) there exists an even operator $\widetilde{T}\in U_{+}^{ev}(E,F)$ which is defined by the formula,
$$
\widetilde{T}f=\sup\{|T|g:\,|g|\leq|f|\}.
$$

\begin{lemma}(\cite{PP-1}, Lemma~3.2.)\label{lemma-ev}
Let $E,F$ be vector lattices with $F$ Dedekind complete. Then $\mathcal{U}^{ev}(E,F)$ is a Dedekind complete sublattice of $\mathcal{U}(E,F)$.
\end{lemma}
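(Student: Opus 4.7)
To prove the lemma I need to verify three things: that $\mathcal{U}^{ev}(E,F)$ is a vector subspace of $\mathcal{U}(E,F)$, that it is closed under the lattice operations $\vee$ and $\wedge$ inherited from $\mathcal{U}(E,F)$, and that it is Dedekind complete. The first point is immediate, since the condition $T(-x)=T(x)$ is preserved by linear combinations, so I can proceed directly to the other two.

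For the sublattice property, the plan is to exploit the Riesz–Kantorovich-type formula of Theorem~\ref{th-1}(1). The key observation is that $|{-}g|=|g|$, so $g\perp h$ if and only if $(-g)\perp(-h)$; hence the map $(g,h)\mapsto(-g,-h)$ is a bijection between the disjoint decompositions $f=g\sqcup h$ of $f$ and the disjoint decompositions $-f=g'\sqcup h'$ of $-f$. Given $S,T\in\mathcal{U}^{ev}(E,F)$, this bijection together with evenness of $S$ and $T$ gives
$$
(S\vee T)(-f)=\sup\{Sg'+Th': -f=g'\sqcup h'\}=\sup\{S(-g)+T(-h): f=g\sqcup h\}=(S\vee T)(f),
$$
so $S\vee T\in\mathcal{U}^{ev}(E,F)$. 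The same argument with Theorem~\ref{th-1}(2) handles $S\wedge T$.

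For Dedekind completeness, let $A\subset\mathcal{U}^{ev}(E,F)$ be order bounded above by some $R\in\mathcal{U}(E,F)$. By the sublattice property just established, the collection of all finite suprema from $A$ remains in $\mathcal{U}^{ev}(E,F)$, so I may assume without loss that $A=\{T_\alpha\}$ is upward directed. Since $\mathcal{U}(E,F)$ is Dedekind complete (Theorem~\ref{th-1}), $T:=\sup_\alpha T_\alpha$ exists in $\mathcal{U}(E,F)$. The decisive step is to show that $T$ is computed pointwise: $Tf=\sup_\alpha T_\alpha f$ for every $f\in E$. Once this is in hand, evenness of $T$ follows instantly, since $T(-f)=\sup_\alpha T_\alpha(-f)=\sup_\alpha T_\alpha(f)=T(f)$.

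The main obstacle is precisely the pointwise-supremum assertion, which I would establish by a direct verification. Fixing $f$ and any $\alpha_0$, the estimates $T_{\alpha_0}f\leq T_\alpha f\leq Rf$ hold for all $\alpha\geq\alpha_0$ (translating $T_\alpha-T_{\alpha_0}\geq 0$ and $R-T_\alpha\geq 0$ into pointwise inequalities), so $Sf:=\sup_\alpha T_\alpha f$ is well defined in the Dedekind complete lattice $F$. Orthogonal additivity of $S$ on disjoint pairs $x\perp y$ uses that $\{T_\alpha\}$ is directed, which lets one pass the supremum through the sum $T_\alpha(x+y)=T_\alpha x+T_\alpha y$. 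Order boundedness of $S$ is clear, so $S\in\mathcal{U}(E,F)$; the relations $S\geq T_\alpha$ for all $\alpha$ and $Sf\leq Tf$ (from $T\geq T_\alpha$ pointwise) then force $S=T$, completing the argument.
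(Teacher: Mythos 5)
Your proof is correct. Note that the paper itself gives no argument for this lemma --- it is quoted from \cite{PP-1} (Lemma~3.2) --- so there is no in-text proof to compare against; judged on its own, your argument is the standard and complete one. The two genuinely nontrivial points are exactly the ones you isolate and settle: (i) the negation map $(g,h)\mapsto(-g,-h)$ is a bijection between the disjoint decompositions of $f$ and those of $-f$, which combined with the Riesz--Kantorovich-type formulas of Theorem~\ref{th-1} gives closure under $\vee$ and $\wedge$; and (ii) the supremum of an upward directed, order bounded family in $\mathcal{U}(E,F)$ is computed pointwise, which you verify directly by checking that $Sf:=\sup_\alpha T_\alpha f$ defines an orthogonally additive order bounded operator squeezed between $T_{\alpha_0}$ and the upper bound $R$, and hence coincides with $\sup_\alpha T_\alpha$. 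The reduction of an arbitrary order bounded subset of $\mathcal{U}^{ev}(E,F)$ to a directed one via finite suprema legitimately relies on the sublattice property established first, so the order of the steps is right as well.
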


\begin{definition} \label{def:dom}
Let $(V,E)$ and $(W,F)$ be lattice-normed spaces. A map $T:V\to W$ is called {\it orthogonally additive} if $T(u+v)=Tu+Tv$ for every $u,v\in V,\,u\bot v$. An orthogonally additive map $T:V\to W$ is called a {\it dominated Uryson} operator if there exists $S\in\mathcal{U}_{+}^{ev}(E,F)$ such that $\ls Tv\rs\leq S\ls v\rs$ for every $v\in V$. In this case we say that $S$ is a \textit{dominant} for $T$. The set of all dominants of the operator $T$ is denoted by $\text{Domin}(T)$. If there is the least element in $\text{Domin}(T)$ with respect to the order induced by $\mathcal{U}_{+}^{ev}(E,F)$ then it is called the {\it least} or the {\it exact dominant} of $T$ and is denoted by
$\ls T\rs$. The set of all dominated Uryson operators from $V$ to $W$ is denoted by $\mathcal{D}_{U}(V,W)$.
\end{definition}

\begin{example} \label{Ex0}
Let $X,Y$ be normed spaces.  Consider the lattice-normed spaces $(X,\Bbb{R})$ and $(Y,\Bbb{R})$. Then a given map $T:X\to Y$ is an element of $\mathcal{D}_{U}(X,Y)$ if and only if there exists  an even function $f:\Bbb{R}\to\Bbb{R}_{+}$ such that $f(0)=0$, the set $f(D)$ is bounded for an every bounded subset $D\subset\Bbb{R}$ and the inequality  $\|Tx\|\leq f(\|x\|)$ holds for every $x\in X$.
\end{example}

\begin{example} \label{Ex1}
Let $E,F$ be vector lattices with $F$ Dedekind complete.  Consider the lattice-normed spaces $(E,E)$ and $(F,F)$ where the lattice valued  norms coincide with the modules. We may show that the vector space $\mathcal{D}_{U}(E,F)$ coincide with $\mathcal{U}(E,F)$. Indeed, if $T\in\mathcal{D}_{U}(E,F)$, then there exists $S\in\mathcal{U}_{+}^{ev}(E,F)$ such that
$|Tx|\leq S|x|$ for every $x\in E$. Thus, $T$ is order bounded. If $T\in\mathcal{U}(E,F)$ then by (\cite{Maz-1}, Proposition~3.4) there exists $S\in\mathcal{U}_{+}^{ev}(E,F)$, so that $|Tf|\leq S(f)\leq S(|f|)$ and therefore $T\in\mathcal{D}_{U}(E,F)$.
\end{example}

\begin{example} \label{Ex2}
Let $(A,\Sigma,\mu)$  be a finite complete measure space, $E$ an order dense ideal in $L_{0}(\mu)$ and  $X$ a Banach space. Let $N:A\times X\to X$ be a function satisfying the following conditions:
\begin{enumerate}
  \item[$(C_{0})$] $N(t,0)=0$ for $\mu$-almost all $t\in A$;
  \item[$(C_{1})$] $N(\cdot,x)$ is Bochner $\mu$-measurable for all $x\in X$;
  \item[$(C_{2})$] $N(t,\cdot)$ is continuous with respect to the norm of $X$  for $\mu$-almost all $t\in A$.
  \item[$(C_{3})$] There exists a measurable function $M:A\times\Bbb{R}\to\Bbb{R}_{+}$,
  so that $M(t,\cdot)$ is increasing and $M(t,r)=M(t,-r)$ for $\mu$-almost all $t\in A$, $r\in\Bbb{R}$ and
   $$
   \sup\limits_{\|x\|\leq r}\|N(t,x)\|\leq M(t,r)\,\, \text{for $\mu$-almost all}\,\, t\in A, r\in\Bbb{R}.
   $$
\end{enumerate}
By $\text{Dom}(N)$ we denote the set of the Bochner $\mu$-measurable vector-function $f:A\to X$, so that $N(\cdot,f(\cdot))\in L_{1}(\mu,X)$. If $E(X)\subset\text{Dom}(N)$ and $M(\cdot,g(\cdot))\in L_{1}(\mu)$ for every $g\in E$, then is defined the orthogonally additive  operator $T:E(X)\to X$ by the formula
$$
Tf:=\int_{A}N(t,f(t))\,d\mu(t).
$$
Let us show that $T\in\mathcal{D}_{U}(E(X),X)$. Indeed
$$
\ls Tf\rs=\|Tf\|=\Big\|\int_{A}N(t,f(t))\,d\mu(t)\Big\|\leq\int_{A}\|N(t,f(t))\|\,d\mu(t)\leq
$$
$$
\leq\int_{A}M(t,\|f(t)\|)\,d\mu(t)=S\ls f\rs,
$$
where $S:E\to\Bbb{R}_{+}$ is the integral Uryson operator, $Se=\int_{A}M(t,e(t))\,d\mu(t)$ and $S$ is a dominant for $T$.
\end{example}

\section{Definition and some properties of orthogonally additive narrow operators}
\label{sec2}

In this section we introduce a new class of nonlinear operators in lattice-normed spaces and describe some of their properties.

\begin{definition} \label{def:nar1}
Let $(V,E)$ be a  lattice-normed space over an  atomless vector lattice $E$ and $X$ be a Banach space. A map $T: V \to X$ is called:
\begin{itemize}
  \item \emph{narrow }  if for every $v\in V$ and $\varepsilon > 0$ there exist $MC$ fragments $v_1$, $v_2$ of $v$ such that $\|Tv_1-Tv_2\|<\varepsilon$;
    \item \emph{strictly narrow } if for every $v\in V$ there exist $MC$ fragments $v_1$, $v_2$ of $v$ such that $Tv_1 = Tv_2  $.
  \end{itemize}
\end{definition}

Next is the corresponding new definition of an order narrow operator.

\begin{definition} \label{def:nar2}
Let $(V,E)$ and $(W,F)$ be  lattice-normed spaces  with $E$ atomless. A map $T:V\to W$ is called
\begin{itemize}
  \item \emph{order narrow }  if for every $v\in V$ there exists a net of decompositions $v = v_\alpha^{1} \sqcup v_\alpha^{2}$ such that $(Tv_{\alpha}^{1} - Tv_{\alpha}^{2} )\overset{(bo)}\longrightarrow 0$.
\end{itemize}
\end{definition}

The set of all orthogonally additive narrow operators from a lattice-normed space $(V,E)$ to a Banach space $X$ we denote by $\mathcal{NOA}(V,X)$.

\begin{lemma} \label{le:3}
Let $(V,E)$ be a lattice-normed space and let $(W,F)$ be a Banach space with a mixed norm. Then every $T\in\mathcal{NOA}(V,W)$ is order narrow.
\end{lemma}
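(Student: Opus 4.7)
The idea is to extract a countable family of $MC$ decompositions of $v$ from narrowness, and then to upgrade the resulting mixed-norm smallness of the differences $Tv_n^{1}-Tv_n^{2}$ to a genuine $(bo)$-convergence in $(W,F)$ by exploiting the Banach-lattice structure of $F$.

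Fix $v\in V$. Applying narrowness with $\varepsilon=1/n$ for each $n\in\mathbb{N}$ produces $MC$ fragments $v_n^{1},v_n^{2}$ of $v$ with $\||Tv_n^{1}-Tv_n^{2}|\|<1/n$. Unwinding the definition of the mixed norm this reads $\|\ls Tv_n^{1}-Tv_n^{2}\rs\|_F<1/n$, so after passing to a subsequence $(n_k)$ I may assume $\|\ls Tv_{n_k}^{1}-Tv_{n_k}^{2}\rs\|_F\le 2^{-k}$. Write $a_k:=\ls Tv_{n_k}^{1}-Tv_{n_k}^{2}\rs\in F_+$.

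Since $\sum_k\|a_k\|_F\le 1$ and $F$ is a Banach lattice, the partial sums $s_n:=\sum_{k=1}^{n}a_k$ form an increasing norm-Cauchy sequence which converges in norm to some $s\in F_+$. Set $e_k:=s-s_{k-1}$. Norm-closedness of $F_+$ applied to $s_n-s_{k-1}\ge 0$ for $n\ge k-1$ gives $e_k\ge 0$; the identity $e_k-e_{k+1}=a_k\ge 0$ yields $e_k\downarrow$; and $\|e_k\|_F\to 0$ combined with the closedness of $-F_+$ shows that any lower bound $t$ of $(e_k)$ satisfies $e_k-t\to-t\in F_+$, so $t\le 0$. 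Hence $e_k\downarrow 0$ in $F$. Since $a_k\le e_k$ by construction, choosing the index function $\alpha(k)=k$ directly exhibits $Tv_{n_k}^{1}-Tv_{n_k}^{2}\overset{(bo)}{\longrightarrow}0$, and the countable net of decompositions $\{(v_{n_k}^{1},v_{n_k}^{2})\}_{k\in\mathbb{N}}$ witnesses that $T$ is order narrow.

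The main point of care is the Banach-lattice manoeuvre that turns a norm-summable sequence of positive elements into one order-dominated by a sequence decreasing to zero; everything else is a routine unfolding of the definitions of the mixed norm and of $(bo)$-convergence.
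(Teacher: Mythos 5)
Your proof is correct and follows essentially the same route as the paper: both extract a sequence of decompositions with summable mixed norms and use the norm-convergent tail sums $e_k=\sum_{j\ge k}a_j$ as the decreasing sequence witnessing $(bo)$-convergence. The only difference is cosmetic (you pass to a subsequence instead of taking $\varepsilon_n=2^{-n}$ outright), and you supply the closed-cone argument for $e_k\downarrow 0$ that the paper leaves implicit.
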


\begin{proof}
Take an arbitrary  element $u\in V$. Let $\varepsilon_{n}:=\frac{1}{2^{n}}$  and let $u=u_{n}^{1}\sqcup u_{n}^{2}$, $n\in\Bbb{N}$ be a sequence of decomposition of the element $u$, such that
$$
\|\ls Tu_{n}^{1}-Tu_{n}^{2}\rs\|=|\|Tu_{n}^{1}-Tu_{n}^{2}|\|\leq\varepsilon_{n}.
$$
We set  $f_{n}=\sum\limits_{k=n}^{\infty}\ls Tu_{k}^{1}-Tu_{k}^{2}\rs$, $f_{n}\in F_{+}$, $f_{n}\downarrow 0$ and the following estimate holds $\ls Tu_{n}^{1}-Tu_{n}^{2}\rs\leq f_{n}$. Thus, $(Tu_{n}^{1}-Tu_{n}^{2})\overset{(bo)}\rightarrow 0$.
\end{proof}

The sets of orthogonally additive narrow and order narrow operators coincide if a vector lattice $F$ is good enough.

\begin{lemma}\label{le:4}
Let  $(V,E)$ and $(W,F)$ be the same as in Lemma~\ref{le:3} and let $F$ be a Banach lattice with order continuous norm. Then orthogonally additive operator  $T:V\rightarrow W$ is order narrow if and only if $T$ is narrow.
\end{lemma}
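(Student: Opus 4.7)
The plan is as follows. One direction, namely that narrow implies order narrow, is immediate from Lemma~\ref{le:3}: since $F$ is a Banach lattice, $(W,F)$ is in particular a Banach space with the mixed norm $|\|\cdot|\|=\|\ls\cdot\rs\|$, so every narrow $T\in\mathcal{NOA}(V,W)$ is already order narrow. The substance of the lemma is therefore the converse: assuming $T$ is order narrow, I will produce, for each $v\in V$ and each $\varepsilon>0$, mutually complemented fragments $v_1,v_2$ of $v$ with $|\|Tv_1-Tv_2|\|<\varepsilon$.

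Fix $v\in V$ and $\varepsilon>0$. By order narrowness there is a net of decompositions $v=v_\alpha^{1}\sqcup v_\alpha^{2}$ such that $Tv_\alpha^{1}-Tv_\alpha^{2}\overset{(bo)}\longrightarrow 0$ in $(W,F)$. Unwinding the definition of $(bo)$-convergence, this yields a decreasing net $(e_\gamma)_{\gamma\in\Gamma}$ in $F_+$ with $\inf_\gamma e_\gamma=0$ and, for every $\gamma$, an index $\alpha(\gamma)$ such that
\[
\ls Tv_\alpha^{1}-Tv_\alpha^{2}\rs\leq e_\gamma \qquad\text{for every }\alpha\geq\alpha(\gamma).
\]

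The key point is now that the order continuity of the norm on $F$ converts $e_\gamma\downarrow 0$ into $\|e_\gamma\|\to 0$. Choose $\gamma_0\in\Gamma$ with $\|e_{\gamma_0}\|<\varepsilon$ and any $\alpha_0\geq\alpha(\gamma_0)$. By monotonicity of the lattice norm on $F$,
\[
|\|Tv_{\alpha_0}^{1}-Tv_{\alpha_0}^{2}|\|=\bigl\|\ls Tv_{\alpha_0}^{1}-Tv_{\alpha_0}^{2}\rs\bigr\|\leq\|e_{\gamma_0}\|<\varepsilon,
\]
so $v_1:=v_{\alpha_0}^{1}$ and $v_2:=v_{\alpha_0}^{2}$ are the required $MC$ fragments. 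There is no serious obstacle here; the proof is essentially a translation between the two definitions, with the only nontrivial ingredient being the standard fact that decreasing order-null nets in a Banach lattice with order continuous norm are norm null.
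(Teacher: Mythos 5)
Your proposal is correct and follows essentially the same route as the paper: the forward direction is delegated to Lemma~\ref{le:3}, and the converse is obtained by unwinding $(bo)$-convergence and using order continuity of the norm on $F$ to pass from $e_\gamma\downarrow 0$ to $\|e_\gamma\|\to 0$. Your write-up is in fact slightly more explicit than the paper's at the step where the dominating net is extracted, but there is no substantive difference.
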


\begin{proof}
Let $T$ be an order narrow operator. Then for every  $u\in V$ there exist a net of decompositions
$u=u_{\alpha}^{1}\sqcup u_{\alpha}^{2}$, such that
$(Tu_{\alpha}^{1}- Tu_{\alpha}^{2})\overset{(bo)}\rightarrow 0$. Fix any $\varepsilon>0$. Using the fact that the norm in $F$ is order continuous we can find $\alpha_{0}\in\Lambda$ such that $\|\ls Tu_{\alpha}^{1}- Tu_{\alpha}^{2} \rs\|<\varepsilon$ for every $\alpha\geq\alpha_{0}$. In view of Lemma~\ref{le:3}, the converse is true.
\end{proof}

\section{$C$-compact operators and operators  to a sequence vector lattices}
\label{sec4}

In this section we investigate connections between narrow and $C$-compact orthogonally additive operators.
Firstly we give a definitions.

Recall that a net $(x_\alpha)$ in a lattice-normed space  $(V,E)$ \textit{laterally converges} to $x \in V$ if $x_\alpha \sqsubseteq x_\beta \sqsubseteq x$ for all indices $\alpha < \beta$ and $x_\alpha \overset{(bo)}\longrightarrow x$. In this case we write $x_\alpha \overset{\rm lat}\longrightarrow x$.

\begin{definition}
Let $(V,E)$ be a  lattice-normed space and  $F$ be a Banach space.
The orthogonally additive operator $T:V\to F$ is called
\begin{itemize}
\item \textit{laterally-to-norm} continuous provided $T$ sends laterally convergent nets in $(V,E)$ to norm convergent nets in $F$;
  \item \textit{generelalized AM}-compact (or $GAM$-compact for short), if $T(M)$ are precompact in $F$ for any $(bo)$-bounded set $M\subset V$;
  \item \textit{$C$-compact} if the sets $\{ Tg: v\in\mathcal{F}_{v}\}$ are precompact in $F$ for every $v\in V$.
\end{itemize}
The set of all $C$-compact dominated Uryson operators from $V$ to $F$ is denoted by $\mathcal{CD}(V,F)$.
\end{definition}

{\begin{example}
Let $(V,E)$ be a  lattice-normed space,  $F$ be a Banach space. Since $\mathcal{F}_{v}$ is $(bo)$-bounded by the element $\ls v\rs$ for any $v\in V$,  an every $GAM$-compact orthogonally additive operator $T:V\to F$ is the $C$-compact.
\end{example}

\begin{example}
Let $([0,1],\Sigma_{1},\mu)$ and $([0,1],\Sigma_{2},\nu)$ be two measure spaces, $E = C[0,1]$, which is a sublattice of $L_\infty(\mu)$, and $F = L_{\infty}(\nu)$. Consider the integral Uryson operator $T:E\to F$ with the kernel $K(s,t,r) = \mathbf{1}_{[0,1]}(t) \mathbf{1}_{[0,1]}(s) |r|$. Since the interval $[0,1]$
is connected, every numerical continuous function $f: [0,1] \to \mathbb R$ is an atom, that is, $f$ has no nonzero fragment and therefore $\{T(g): \, g \sqsubseteq f\}$ is a relatively compact set in $F$ for every $f\in E$. Take $u(t) = \mathbf{1}_{[0,1]}(t)$ and consider the order bounded set $D=\{f\in E:\,|f|\leq u\}$ in $E$. Then we have
$$
T(f)(s) = \int_0^1 \mathbf{1}_{[0,1]}(t) \mathbf{1}_{[0,1]}(s)|f(t)| \, d\mu(t) = \mathbf{1}_{[0,1]}(s) \int_0^1 |f(t)|\,d\mu(t).
$$
Observe that $T(D)$ is not relatively compact in $F$. Therefore $T$ is $C$-compact, but not $AM$-compact.
\end{example}

Now, we need the following known property of atoms in vector lattices.
\begin{prop}(\cite{LZ}, Theorem~26.4~(ii)) \label{pr}
For any two atoms $u,v$ in a vector lattice $E$, either $u \bot v$, or $v = \lambda u$ for some $0 \neq \lambda \in \mathbb R$.
\end{prop}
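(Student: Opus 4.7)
The plan is to assume $u\not\perp v$ and derive $v=\lambda u$, reducing the proposition to the following key lemma: \emph{if $u$ is an atom and $0\le f\le u$, then $f=\lambda u$ for some $\lambda\in[0,1]$}. Given the lemma, set $w:=u\wedge v$; since $u\not\perp v$ means $w>0$, applying the lemma first with $u$ as the atom and $w$ as the subelement, and then with $v$ in place of $u$, produces $\alpha,\beta\in(0,1]$ with $w=\alpha u=\beta v$. Hence $v=(\alpha/\beta)u$, which is the desired conclusion with $\lambda=\alpha/\beta\neq 0$.

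For the lemma, I would define
\[
\lambda_0:=\sup\{\lambda\ge 0:\lambda u\le f\}.
\]
The set is nonempty (it contains $0$) and bounded above by $1$: from $\lambda u\le f\le u$ and archimedicity of $E$ one has $\lambda\le 1$. The archimedicity of $E$ also gives $\lambda_0 u\le f$ by a standard limit argument: $(\lambda_0-\varepsilon)u\le f$ for every $\varepsilon>0$ implies $(\lambda_0 u-f)^+\le\varepsilon u$ for every $\varepsilon>0$, and hence $(\lambda_0 u-f)^+=0$. Put $g:=f-\lambda_0 u\in E_+$, so $g\le(1-\lambda_0)u\le u$; by the maximality of $\lambda_0$, no element of the form $\varepsilon u$ with $\varepsilon>0$ can lie below $g$.

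The crux is to show $g=0$. Assuming for contradiction $g>0$, examine the decomposition $u=g+(u-g)$. Since $u$ is an atom, this cannot be a disjoint decomposition into two nonzero parts: if $g\wedge(u-g)=0$ then $g\in\{0,u\}$, and $g=u$ would yield $1\cdot u\le g$, contradicting the maximality of $\lambda_0$. Hence $h:=g\wedge(u-g)>0$, and $h\le g$, $h\le u-g$ so that $2h\le u$. One now iterates this construction (or, equivalently, considers the principal ideal generated by $u$), using the indecomposability of $u$ together with the archimedean property to produce a positive multiple $\varepsilon u$ lying below $g$, which again contradicts the maximality of $\lambda_0$. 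This forces $g=0$, i.e.\ $f=\lambda_0 u$.

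The main obstacle is precisely this last step: producing a positive multiple of $u$ beneath an arbitrary strictly positive $g\le u$. This is not a purely formal consequence of the indecomposability definition of an atom; it is the non-trivial content of the characterization of atoms as discrete elements in \cite{LZ}, Theorem~26.4, and it genuinely depends on the archimedean hypothesis. Since the paper cites this result directly, one may also simply invoke it without reproducing the argument.
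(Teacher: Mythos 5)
The paper does not prove this proposition at all --- it is quoted directly from Luxemburg--Zaanen, Theorem~26.4(ii) --- so there is no argument of the authors' to measure yours against; what follows judges your proposal on its own terms. Your reduction is the right one: once an atom $u$ is known to be \emph{discrete} (i.e.\ $0\le f\le u$ forces $f=\lambda u$), applying this to $w=u\wedge v>0$ gives $w=\alpha u=\beta v$ with $\alpha,\beta>0$ and hence $v=(\alpha/\beta)u$. Your handling of $\lambda_0=\sup\{\lambda\ge 0:\lambda u\le f\}$ (boundedness, and $\lambda_0 u\le f$ via archimedicity) is also correct.

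The gap is exactly where you locate it, and it is genuine: $g$ and $u-g$ are the wrong pair on which to test the atom property, since they need not be disjoint, and the proposed iteration $h:=g\wedge(u-g)$ with $2h\le u$ goes nowhere --- repeating it only reproduces the same bound and never produces $\varepsilon u\le g$. The missing idea is to test atomicity on the disjoint pair
\[
f_1:=(f-\lambda u)^+,\qquad f_2:=(f-\lambda u)^-=(\lambda u-f)^+,\qquad \lambda\in(0,1].
\]
Here $f_1\le f\le u$, $f_2\le\lambda u\le u$ and $f_1\wedge f_2=0$, so the definition of an atom forces $f_1=0$ or $f_2=0$; that is, $f$ is \emph{comparable with every multiple} $\lambda u$. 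Consequently $\{\lambda:\lambda u\le f\}$ and $\{\lambda:f\le\lambda u\}$ cover $(0,\infty)$, and squeezing with the archimedean property on both sides of $\lambda_0$ yields $\lambda_0 u\le f$ and $f\le\lambda_0 u$, i.e.\ $f=\lambda_0 u$; no separate production of ``$\varepsilon u$ beneath $g$'' is needed. With this lemma the rest of your argument closes. Your remark that archimedicity is indispensable is correct: in the lexicographically ordered plane $u=(1,0)$ and $v=(0,1)$ are non-disjoint, non-proportional atoms, so the proposition itself fails there.
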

We say that a vector lattice $E$ is \emph{ discrete} if there is a collection $(u_i)_{i \in I}$ of atoms in $E^+$, called a \emph{generating collection of atoms}, such that $u_i \bot u_j$ for $i \neq j$ and for every $x \in E$ if $|x| \wedge u_i = 0$ for each $i \in I$ then $x = 0$.

Let $X$ be a Banach space, $E=\Bbb{R}^{n}$ for some $n\in\Bbb{N}$, and $E(X)$ be a K{\"{o}}the-Bochner  space of finite $X$-valued sequences. Observe that $E(X)$ is a lattice-normed space with the vector norm defined by the formula $\ls f\rs:=\|f(\cdot)\|_{X}$ for every $f\in E(X)$, and $E(X)$ is also a Banach space with a mixed norm $\|f\|_{E(X)}:=\|\ls f\rs\|_{E}$.
{\begin{example}
The identity operator $I:E(X)\to E(X)$ is the $C$-compact. Indeed, $E$ is finite-dimensional atomic Banach lattice, and therefore  the set $\mathcal{F}_{\ls f\rs}$ and consequently the set $\mathcal{F}_{f}$ has a finite number of the  nonzero fragments for every $f\in E(X)$. Hence the  $\mathcal{F}_{f}$ is the precompact set in $E(X)$.
\end{example}
Remark that a C-compact abstract Uryson operator $T:E \to F$ between Banach lattices $E$, $F$ with $F$  $\sigma$-Dedekind complete is AM-compact if, in addition, $T$ is uniformly continuous on order bounded subsets of $E$ \cite[Theorem~3.4]{Maz-2}.

Now we need a some auxiliary  lemmas. The following lemma is known as the lemma on rounding off coefficients (\cite{Kad-1}, p.~14).

\begin{lemma} \label{le:rounding}
Let  $(x_{i})_{i=1}^{n}$ be a finite collection of vectors in a finite dimensional normed space  $X$ and let $(\lambda_{i})_{i=1}^{n}$ be a collection of reals with $0\leq\lambda_{i}\leq 1$ for each $i$. Then there exists a collection $(\theta_{i})_{i=1}^{n}$ of numbers $\theta_{i}\in\{0,1\}$ such that
$$
\Big\|\sum\limits_{i=1}^{n}(\lambda_{i}-\theta_{i}) \, x_{i}\Big\|\leq\frac{\text{\rm dim} \, X}{2}\max_{i}\|x_{i}\|.
$$
\end{lemma}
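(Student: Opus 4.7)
Let me write $d = \dim X$ and view the finite collection through the linear map $L \colon \mathbb{R}^n \to X$ given by $L(\mu) = \sum_{i=1}^{n} \mu_i x_i$. The inequality to prove says that for $\lambda \in [0,1]^n$ there exists a vertex $\theta$ of the cube $[0,1]^n$ with $\|L(\lambda) - L(\theta)\| \leq \frac{d}{2}\max_i\|x_i\|$. My plan is to first move $\lambda$, without changing $L(\lambda)$, to a boundary point of $[0,1]^n$ that has only few coordinates strictly between $0$ and $1$, and then round off those few remaining coordinates in the cheap way.

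Consider the fiber
\[
S = \{\mu \in [0,1]^{n}\,:\,L(\mu)=L(\lambda)\}.
\]
It is nonempty (it contains $\lambda$), convex, and compact, so by the Krein--Milman theorem it has an extreme point $\mu^{*}$. Let $I = \{i \colon 0 < \mu^{*}_i < 1\}$ be the set of ``free'' coordinates of $\mu^{*}$. The key claim is that $|I| \leq d$. Indeed, if $|I| > d$ one can find a nonzero vector $h \in \mathbb{R}^{n}$ supported on $I$ with $L(h) = 0$ (the map $L$ restricted to $\mathbb{R}^{I}$ has kernel of dimension at least $|I| - d \geq 1$, since the image lies in $X$, which has dimension $d$). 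Then for all sufficiently small $\varepsilon > 0$ both $\mu^{*}\pm\varepsilon h$ lie in $[0,1]^{n}$ (because the perturbation only touches coordinates strictly inside $(0,1)$) and in $S$, contradicting extremality of $\mu^{*}$.

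Now define $\theta \in \{0,1\}^{n}$ by $\theta_{i}=\mu^{*}_{i}$ for $i \notin I$, and for $i \in I$ choose $\theta_{i} \in \{0,1\}$ with $|\mu^{*}_{i}-\theta_{i}| \leq \tfrac{1}{2}$ (round to the nearer endpoint). Since $L(\lambda)=L(\mu^{*})$ and $\mu^{*}$ and $\theta$ agree outside $I$,
\[
\Bigl\|\sum_{i=1}^{n}(\lambda_{i}-\theta_{i})x_{i}\Bigr\|
= \|L(\mu^{*})-L(\theta)\|
= \Bigl\|\sum_{i \in I}(\mu^{*}_{i}-\theta_{i})x_{i}\Bigr\|
\leq \sum_{i \in I}\tfrac{1}{2}\|x_{i}\|
\leq \tfrac{|I|}{2}\max_{i}\|x_{i}\|
\leq \tfrac{d}{2}\max_{i}\|x_{i}\|,
\]
which is the desired bound.

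The only subtle point, and hence the main obstacle, is the extreme-point count $|I| \leq d$; everything else is a one-line triangle inequality. That count is a standard face-dimension argument for polytopes (the intersection of the cube with the affine subspace $L(\mu)=L(\lambda)$ of codimension at most $d$), and the perturbation argument above makes it self-contained without invoking polytope theory by name.
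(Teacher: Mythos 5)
Your proof is correct. The paper does not prove this lemma at all --- it is quoted from Kadets--Kadets \cite{Kad-1} as the known ``lemma on rounding off coefficients'' --- so there is no in-paper argument to compare against. Your extreme-point argument (take an extreme point $\mu^{*}$ of the fiber polytope $S=\{\mu\in[0,1]^{n}:L(\mu)=L(\lambda)\}$, observe it has at most $d=\dim X$ coordinates in $(0,1)$ via the kernel-perturbation contradiction, then round those to the nearer endpoint at cost $\tfrac12\|x_i\|$ each) is the standard proof of this result, equivalent to the usual iterative ``push along a linear dependence until a coordinate hits $0$ or $1$'' formulation; all steps check out.
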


\begin{lemma} \label{le:fkkkiryu8}
Let $(V,E)$ and $F$ be the same as in the Theorem~\ref{thm:ourmain} and $T: V \to F$ be an orthogonally additive laterally-to-norm continuous operator. If $e\in E_{+}$, $\ls v_{n}\rs\leq e$ and $v_{n}\bot v_{m}$ for each integers $n\neq m$ then
$\lim\limits_{n\to\infty} \|T (v_n)\| = 0$.
\end{lemma}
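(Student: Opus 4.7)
My plan is to reduce the claim to laterally-to-norm continuity through the partial sums $s_n := \sum_{i=1}^n v_i$. Orthogonal additivity of $T$ combined with pairwise disjointness of the $v_i$ yields $T(s_n) = T(s_{n-1}) + T(v_n)$ for $n \geq 2$, so once I exhibit a lateral limit $s \in V$ for $(s_n)$, the telescoping identity $\|T(v_n)\| = \|T(s_n) - T(s_{n-1})\|$ combined with the norm convergence $T(s_n), T(s_{n-1}) \to T(s)$ will immediately give $\|T(v_n)\| \to 0$.

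The crux is producing the $(bo)$-limit of $(s_n)$. Since $v_i \bot v_j$ for $i \neq j$, the elements $\ls v_i \rs \in E_+$ are pairwise disjoint, and pairwise disjoint positives each dominated by $e$ satisfy $\sum_{i=1}^n \ls v_i \rs = \bigvee_{i=1}^n \ls v_i \rs \leq e$. Under the Dedekind completeness of $E$ carried by the Banach--Kantorovich hypothesis of Theorem~\ref{thm:ourmain}, the supremum $u := \sup_n \sum_{i=1}^n \ls v_i \rs \leq e$ exists and $f_n := u - \sum_{i=1}^n \ls v_i \rs$ decreases to $0$. The triangle inequality then yields $\ls s_m - s_n \rs \leq \sum_{i=n+1}^m \ls v_i \rs \leq f_n$ for $m \geq n$, so $(s_n)$ is $(bo)$-fundamental and $(bo)$-converges to some $s \in V$.

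To upgrade $(bo)$-convergence to lateral convergence I have to verify $s_n \sqsubseteq s$. For each $m \geq n$, the decomposition $s_m = s_n + (s_m - s_n)$ together with $\ls s_n \rs \wedge \ls s_m - s_n \rs = 0$ (by disjointness of the $v_i$ and distributivity of $\wedge$ over $\vee$) and $\ls s_m \rs \geq \ls s_n \rs$ (from the standard inequality $\ls a + b \rs \geq \ls a \rs$ whenever $\ls a \rs \wedge \ls b \rs = 0$, a quick consequence of the triangle inequality combined with the identity $(\ls a \rs - \ls b \rs)^+ = \ls a \rs$ for disjoint positives) shows $s_n \sqsubseteq s_m$. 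Passing $m \to \infty$ and invoking order continuity of $\wedge$ together with the Lipschitz estimate $|\ls x \rs - \ls y \rs| \leq \ls x - y \rs$ yields $\ls s_n \rs \leq \ls s \rs$ and $s_n \bot (s - s_n)$, so $s_n$ laterally converges to $s$; laterally-to-norm continuity of $T$ then closes the argument. The single delicate step is the uniform bound $\sum_{i=1}^n \ls v_i \rs \leq e$, without which the partial sums could fail to $(bo)$-converge in $V$ and the whole strategy would collapse.
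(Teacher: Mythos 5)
Your argument is correct and is essentially the paper's own proof: both pass to the partial sums $s_n=\sum_{i\leq n}v_i$, obtain their lateral limit $s$, apply laterally-to-norm continuity to get norm convergence of $T(s_n)$, and telescope via orthogonal additivity to conclude $\|T(v_n)\|=\|T(s_n)-T(s_{n-1})\|\to 0$. The only difference is that the paper simply asserts the lateral convergence of the partial sums, whereas you verify it in detail (correctly) from the Dedekind completeness of $E$ and the $(bo)$-completeness of $V$.
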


\begin{proof}
Since $V$ is Banach-Kantorovich space, the sequence $u_{n}=\sum\limits_{k=1}^{n}v_{k}$ laterally
converges to $u=\sum\limits_{k=1}^{\infty}v_{k}$
Then the laterally-to-norm continuity of $T$ implies that $Tu_{n}$
converges to $Tu$ in F. The sequence $(T(u_n))_{n=1}^\infty$ is fundamental, that is, $\lim\limits_{n,m \to \infty} \|T(u_n) - T(u_m)\| = 0$, hence
$$
\|T(u_n) - T(u_{n-1}) \| = \Bigl\|T\Big(\sum\limits_{k=1}^{n}v_{k}\Big)-T \Big(\sum\limits_{k=1}^{n-1}v_{k}\Big) \Bigr\|= \|T(u_n) \|
$$
implies $\lim\limits_{n\to\infty} \|T (u_n)\| = 0$.
\end{proof}

\begin{lemma} \label{le:4.5}
Let $(V,E)$ be a Banach-Kantorovich space over  atomless Dedekind complete vector lattice $E$, $F$ be a  finite dimensional Banach space, $T: V \to F$ be an orthogonally additive laterally-to-norm continuous operator and $v\in V$. Then there exist $MC$ fragments $v_{1},v_{2}$ of $v$ such that $\|T(v_1)\| = \|T(v_2) \|$.
\end{lemma}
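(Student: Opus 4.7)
My plan is to construct a lateral-continuous chain of fragments $\{w_t\}_{t\in[0,1]}$ of $v$, with $w_0=0$, $w_1=v$, and $w_s\sqsubseteq w_t$ for $s\le t$, and then apply the Intermediate Value Theorem to the real-valued function $\varphi(t):=\|Tw_t\|-\|T(v-w_t)\|$. Since $\varphi(0)=-\|Tv\|\le 0$ and $\varphi(1)=\|Tv\|\ge 0$, a zero $t^*$ of $\varphi$ will yield the required MC pair $v_1:=w_{t^*}$, $v_2:=v-w_{t^*}$.

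\emph{Construction of the chain.} The components of $\ls v\rs$ form an atomless Dedekind complete Boolean algebra because $E$ does. First I would build a chain $\{e_t\}_{t\in[0,1]}$ of components of $\ls v\rs$ with $e_0=0$, $e_1=\ls v\rs$, $e_s\le e_t$ for $s\le t$, and with $t\mapsto e_t$ order-continuous from both sides. This is done by dyadic refinement: using atomlessness, inductively split $\ls v\rs$ at level $n$ into $2^n$ pairwise disjoint nonzero components $p_{k,n}$ refining the level $n{-}1$ partition, arranging the refinement so that the order-size of each piece tends to $0$; set $e_{k/2^n}:=\sum_{j\le k}p_{j,n}$ and $e_t:=\sup\{e_q:q\text{ dyadic},\,q\le t\}$ for general $t$. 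Next, using decomposability iteratively, I lift the dyadic components to fragments $w_{k/2^n}$ of $v$ with $\ls w_{k/2^n}\rs=e_{k/2^n}$ and $w_{k/2^n}\sqsubseteq w_{(k+1)/2^n}$. For non-dyadic $t$, the increasing net $(w_q)_{q\text{ dyadic},\,q\le t}$ is $(bo)$-fundamental because $\ls w_{q_1}-w_{q_2}\rs=|e_{q_1}-e_{q_2}|\to 0$, and $(bo)$-completeness of the Banach-Kantorovich space $V$ yields a limit $w_t$ with $\ls w_t\rs=e_t$ (from the $(bo)$-continuity of the vector norm) that is a fragment of $v$ (because $e_t\wedge(\ls v\rs-e_t)=0$).

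\emph{Continuity of $Tw_t$ and IVT.} For $t\uparrow t_0$ the fragments $w_t$ laterally converge to $w_{t_0}$: indeed $w_t\sqsubseteq w_{t_0}$ and $\ls w_{t_0}-w_t\rs=e_{t_0}-e_t\downarrow 0$, so by laterally-to-norm continuity of $T$, $Tw_t\to Tw_{t_0}$ in $F$. For $t\downarrow t_0$ the complementary fragments $v-w_t$ laterally converge to $v-w_{t_0}$, so $T(v-w_t)\to T(v-w_{t_0})$, and orthogonal additivity gives $Tw_t=Tv-T(v-w_t)\to Tw_{t_0}$. Hence $\varphi$ is continuous on $[0,1]$ and the IVT supplies the desired $t^*$.

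\emph{Main obstacle.} The one nontrivial step is the chain construction: arranging the dyadic refinement of $\ls v\rs$ so that $t\mapsto e_t$ is order-continuous at every $t$ (including the dyadic values, where a naive construction may jump), and then lifting the chain coherently to fragments of $v$ through repeated decomposability and the $(bo)$-completeness of $V$. Once the chain is in hand the remainder is a direct application of laterally-to-norm continuity and the IVT. Note that the finite-dimensionality of $F$ is actually not used in this argument; it seems to be imposed here because the lemma will be combined with a later rounding-off construction where $\dim F$ enters explicitly.
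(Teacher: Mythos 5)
Your strategy --- a lateral-to-norm continuous one-parameter chain of fragments plus the Intermediate Value Theorem --- is genuinely different from the paper's, but the step you yourself flag as ``the main obstacle'' is not merely delicate: it fails in the stated generality, and nothing in your sketch repairs it. An increasing, two-sidedly order-continuous family $(e_t)_{t\in[0,1]}$ of components of $\ls v\rs$ with $e_0=0$ and $e_1=\ls v\rs$ induces (via successive differences over dyadic intervals) a dyadic partition tree $(p_s)_{s\in 2^{<\omega}}$ of components of $\ls v\rs$ in which \emph{every} branch has infimum $0$; conversely, a branch with nonzero infimum forces a jump of $t\mapsto e_t$ at the corresponding parameter. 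There is no ``order-size'' available in an abstract vector lattice to make the pieces shrink along all $2^{\aleph_0}$ branches simultaneously, and in general this cannot be arranged: if $B$ is the Boolean completion of the atomless countably closed tree $2^{<\omega_1}$ (a ZFC object), then $B$ is an atomless complete Boolean algebra satisfying the weak distributive law $\inf_n\sup_{|s|=n}p_s=\sup_{x\in 2^{\omega}}\inf_n p_{x\restriction n}$ for every dyadic partition tree, so the branch infima have supremum $\ls v\rs\neq 0$ and some branch has nonzero infimum. Taking $E=C_\infty(\mathrm{St}(B))$ gives an atomless Dedekind complete vector lattice for which \emph{no} order-continuous chain $(e_t)$ from $0$ to $\ls v\rs$ exists; hence $\varphi$ cannot be made continuous and the IVT argument collapses. (Your construction does work when the band generated by $\ls v\rs$ carries a strictly positive order-continuous functional, e.g.\ when $E$ is an order ideal of some $L_0(\mu)$, where measure-halving produces the chain; but the lemma is stated for arbitrary atomless Dedekind complete $E$.)

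The paper avoids any parametrization precisely for this reason. It fixes an arbitrary MC pair $v_1,v_2$ with, say, $\|Tv_1\|>\|Tv_2\|$, applies Zorn's lemma to the poset $D=\{w\sqsubseteq v_1:\ \|T(v_1-w)\|-\|T(v_2+w)\|\ge 0\}$ ordered by $\sqsubseteq$ (chains have suprema by $(bo)$-completeness, and membership in $D$ passes to the supremum by laterally-to-norm continuity together with $T(v_1-w)=Tv_1-Tw$), and then shows that a maximal $w_0$ must give equality: otherwise, splitting $v_1-w_0$ into an infinite disjoint sequence of nonzero fragments and invoking Lemma~\ref{le:fkkkiryu8} (applied to $T$ and to $u\mapsto T(-u)$) yields $f\sqsubseteq v_1-w_0$ with $\|Tf\|$ and $\|T(-f)\|$ smaller than a quarter of the gap, contradicting maximality of $w_0$. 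This exhaustion needs only atomlessness and completeness, not a continuum of intermediate components. The parts of your argument surrounding the chain --- lifting components to fragments by decomposability and $(bo)$-completeness, $\ls w_{q_2}-w_{q_1}\rs=e_{q_2}-e_{q_1}$ for nested fragments, the identity $Tw_t=Tv-T(v-w_t)$, and the two one-sided lateral limits --- are fine, and your observation that $\dim F<\infty$ is never used is correct (the paper's proof does not use it either); but as written the proof has a genuine gap at its central step.
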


\begin{proof}
Fix any $MC$ fragments $v_1, v_2$ of $v$. If $\|T(v_1)\| = \|T(v_2) \|$ then there is nothing to prove. With no loss of generality we may and do assume that $\|T(v_1)\| - \|T(v_2)\| > 0$. Consider the partially ordered set
$$
D = \{w \sqsubseteq v_1: \, \|T(v_1 - w)\|-\|T(v_{2}+w)\|\geq 0\}
$$
where $w_{1} \leq w_{2}$ if and only if $w_{1}\sqsubseteq w_{2}$. If $B \subseteq D$ is a chain then $w^{\star}=\vee B\in D$ by the laterally-to-norm continuity of $T$. By the Zorn lemma, there is a maximal element $w_{0}\in D$. Now we show that $\|T(v_{1}-w_{0})\|-\|T(v_{2}+w_{0})\|=0$. Suppose on the contrary that
$$
\alpha=\|T(v_{1}-w_{0})\|-\|T(v_{2}+w_{0})\|> 0.
$$
Since $E$ is atomless, we can choose  a fragment $0\neq f\sqsubseteq(v_{1}-w_{0})$ such that $\|T(f)\|<\frac{\alpha}{4}$ and $\|T(-f)\|<\frac{\alpha}{4}$ . Since $w_{0}\bot f$, $w_{0}+f\sqsubseteq v_{1}$  we have
\begin{align*}
&\|T(v_{1}-w_{0}-f)\| - \|T(v_{2}+w_{0}+f)\| = \\
&\|T(v_{1}-w_{0})+T(-f)\| - \|T(v_{2}+w_{0})+T(f)\| \geq \\
&\|T(v_{1}-w_{0})\| - \|T(-f)\| + \|T(v_2 + w_0)\| - \|T(f)\| > \frac{\alpha}{2} > 0,
\end{align*}
that contradicts the maximality of $w_{0}$.
\end{proof}

\begin{lemma} \label{le:iiyugte678}
Let $(V,E)$, $F$, $T:V\to F$ be the same as in the Lemma~\ref{le:4.5},  $v\in V$ and $(v_{n})_{n=1}^{\infty}$ be a disjoint tree on $v$. If $\|T(v_{2n})\| = \|T(v_{2n+1})\|$ for every $n\geq 1$ then
$$
\lim_{m \to \infty} \gamma_m = 0, \,\,\,\,\, \mbox{where} \,\,\,\,\, \gamma_m = \max\limits_{2^{m}\leq i<2^{m+1}} \|T(v_i)\|.
$$
\end{lemma}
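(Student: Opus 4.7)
My plan is to argue by contradiction using Lemma~\ref{le:fkkkiryu8}. If $\gamma_m\not\to 0$, I will extract an infinite pairwise disjoint sequence of fragments of $v$ whose $T$-images have norms uniformly bounded below by a positive constant; since each such fragment satisfies $\ls\cdot\rs\leq\ls v\rs$, this will contradict Lemma~\ref{le:fkkkiryu8} and finish the proof. So assume $\gamma_m\not\to 0$, fix $\varepsilon>0$ and an infinite $M\subset\mathbb{N}$ with $\gamma_m\geq\varepsilon$ for all $m\in M$, and for each $m\in M$ pick an index $j_m\in[2^m,2^{m+1})$ realising $\|T(v_{j_m})\|=\gamma_m$.

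Equip the dyadic indices with the natural tree order $i\preceq j$ (``the binary expansion of $i$ is a prefix of that of $j$'', equivalently $v_j\sqsubseteq v_i$). Since $J=\{j_m:m\in M\}$ is infinite, the standard dichotomy for infinite posets (Ramsey, or the infinite Dilworth theorem) yields either an infinite antichain or an infinite chain inside $J$, and I treat these two cases separately.

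The antichain case is immediate: pairwise incomparable dyadic indices split at a common ancestor into two disjoint children, and disjointness propagates to all descendants, so the corresponding $(v_{j_{m_k}})$ form a pairwise disjoint sequence of fragments of $v$ with $\|T(v_{j_{m_k}})\|=\gamma_{m_k}\geq\varepsilon$, contradicting Lemma~\ref{le:fkkkiryu8}.

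The chain case is where the sibling-equality hypothesis is essential: if $j_{m_1}\preceq j_{m_2}\preceq\ldots$, the $v_{j_{m_k}}$ are nested rather than disjoint, so I pass to off-path siblings. For each $k$, let $q_k$ be the unique child of $j_{m_k}$ that is an ancestor of $j_{m_{k+1}}$, and let $q_k'$ be its sibling. Then $v_{q_k'}\sqsubseteq v_{j_{m_k}}$ is disjoint from $v_{q_k}$; and for $l>k$ one has $v_{q_l'}\sqsubseteq v_{j_{m_l}}\sqsubseteq v_{q_k}$, so $v_{q_l'}\bot v_{q_k'}$. Hence $(v_{q_k'})_{k\geq 1}$ is pairwise disjoint. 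The hypothesis applied to $n=j_{m_k}$ gives $\|T(v_{q_k})\|=\|T(v_{q_k'})\|$, and orthogonal additivity on $v_{j_{m_k}}=v_{q_k}\sqcup v_{q_k'}$ yields $T(v_{q_k})+T(v_{q_k'})=T(v_{j_{m_k}})$; together these force $\|T(v_{q_k'})\|\geq\tfrac12\gamma_{m_k}\geq\tfrac{\varepsilon}{2}$, again contradicting Lemma~\ref{le:fkkkiryu8}. This chain case is the only real obstacle: a nested chain of fragments on its own does not contradict Lemma~\ref{le:fkkkiryu8}, and the sibling-equality hypothesis is used precisely to convert such a chain into a genuinely disjoint family of siblings carrying a uniform positive lower bound on $\|T\|$.
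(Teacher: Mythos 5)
Your argument is correct, and it reduces to the same key fact as the paper's proof --- Lemma~\ref{le:fkkkiryu8}, applied to a pairwise disjoint sequence of fragments of $v$ on which $\|T\|$ stays above $\varepsilon/2$ --- but the combinatorial extraction of that sequence is genuinely different. The paper works with the auxiliary quantities $\varepsilon_n=\limsup_{m}\max\{\|T(v_i)\|:\,2^m\le i<2^{m+1},\ v_i\sqsubseteq v_n\}$ and recursively descends the tree along a branch $i_1,i_2,\dots$ on which $\varepsilon_{i_j}=\varepsilon$, using the hypothesis $\|T(v_{2n})\|=\|T(v_{2n+1})\|$ only to guarantee that at each level one can pick a disjoint witness $n_j\ne i_j$ with $\|T(v_{n_j})\|\ge\varepsilon/2$ (namely the sibling, in case the maximum sits on the distinguished branch). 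You instead invoke the Ramsey chain/antichain dichotomy on the set of maximizing indices: the antichain case needs no hypothesis at all, since incomparable nodes of a disjoint tree are automatically disjoint, and the chain case is where you spend the sibling equality, combining it with orthogonal additivity and the triangle inequality to get $\|T(v_{q_k'})\|\ge\tfrac12\|T(v_{j_{m_k}})\|$ for the off-branch siblings, which are pairwise disjoint by the nesting you describe. Your version is cleaner and isolates exactly where the hypothesis is needed (only when the large norms concentrate along a single branch); the paper's version avoids any appeal to Ramsey's theorem at the cost of a more delicate recursion. Two small points to make airtight: adopt the convention that a node is an ancestor of itself, so that $q_k$ is well defined when $j_{m_{k+1}}$ is itself a child of $j_{m_k}$, and record the easy induction showing that distinct nodes at the same level of a disjoint tree are disjoint, which underlies both your antichain step and the disjointness of $v_{q_k}$ and $v_{q_k'}$.
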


\begin{proof}
Set $\varepsilon=\limsup_{m\to\infty}\gamma_{m}$ and prove that $\varepsilon = 0$, which will be enough for the proof. Suppose on the contrary that $\varepsilon>0$. Then for each $n\in\Bbb{N}$ we set
$$
\varepsilon_{n}=\limsup\limits_{m\to\infty}\max\limits_{2^{m}\leq i<2^{m+1},\,v_{i}\sqsubseteq v_{n}} \|T(v_i)\|.
$$
Hence, for each $m\in\Bbb{N}$ one has
$$
\max\limits_{2^{m}\leq i<2^{m+1}}\varepsilon_{i}=\varepsilon.\,\,\,\,\,(\star)
$$
Now we are going to construct a sequence of mutually disjoint elements $(v_{n_{j}})_{j=1}^{\infty}$ such that $\|T(v_{n_{j}})\| \geq \frac{\varepsilon}{2}$, that is impossible by Lemma~\ref{le:fkkkiryu8}. At the first step we choose $m_{1}$ so that $\max\limits_{2^{m_{1}}\leq i<2^{m_{1}+1}}\|T(v_i)\|\geq\frac{\varepsilon}{2}$. By $(\star)$, we choose $i_1$, $2^{m_{1}}\leq i_{1}<2^{m_{1}+1}$ so that $\varepsilon_{i_1}=\varepsilon$. Using $\|T(v_{2n})\| = \|T(v_{2n+1})\|$, we choose $n_1\neq i_1$, $2^{m_{1}}\leq n_1 <2^{m_{1}+1}$ so that $\|T(v_{n_{1}})\|\geq\frac{\varepsilon}{2}$. At the second step we choose $m_{2}>m_{1}$ so that
$$
\max\limits_{2^{m}\leq i<2^{m+1},\,e_i \sqsubseteq e_{i_1}}\|T(v_i)\|\geq\frac{\varepsilon}{2}.
$$
By $(\star)$, we choose $i_{2}$, $2^{m_{2}}\leq i_{2}<2^{m_{2}+1}$ so that $\varepsilon_{i_{2}}=\varepsilon$. Then we choose $m_{2}\neq i_{2}$, $2^{m_{2}}\leq i_{2}<2^{m_{2}+1}$ so that $\|T(v_{m_2})\|\geq\frac{\varepsilon}{2}$. Proceeding further, we construct the desired sequence. Indeed, $\|T(v_{m_i})\|\geq\frac{\varepsilon}{2}$ by the construction and the mutual disjointness for $v_{m_{l}},v_{m_{j}}$, $j\neq l$ is guaranteed by the condition $m_{j}\neq i_{j}$, because the elements $v_{m_{j+l}}$ are fragments of $v_{i_{j}}$ which are disjoint to $v_{m_{j}}$.
\end{proof}

\begin{lemma} \label{le:w88495737}
Let $(V,E)$ be a Banach-Kantorovich space over  atomless Dedekind complete vector lattice $E$ and $F$ be a  finite dimensional Banach space. Then every orthogonally additive laterally-to-norm continuous $C$-compact operator $T: V \to F$ is narrow.
\end{lemma}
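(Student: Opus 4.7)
My plan is to combine the three preceding lemmas with the rounding-off lemma, exploiting the finite-dimensionality of $F$ to turn the equal-norm splittings into near-equal-value splittings.

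First, I would use Lemma~\ref{le:4.5} inductively to build a disjoint tree $(v_n)_{n=1}^{\infty}$ on $v$ with $v_1 = v$ and $\|T(v_{2n})\| = \|T(v_{2n+1})\|$ for every $n \geq 1$: at each node, applying Lemma~\ref{le:4.5} to $v_n$ produces the required $MC$ fragments. By Lemma~\ref{le:iiyugte678}, the sequence $\gamma_m = \max_{2^m \leq i < 2^{m+1}} \|T(v_i)\|$ tends to zero.

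Given $\varepsilon > 0$, I would choose $m$ so large that $\gamma_m < \varepsilon/\dim F$. The $2^m$ fragments $v_{2^m}, v_{2^m+1}, \ldots, v_{2^{m+1}-1}$ at level $m$ are pairwise disjoint and sum to $v$. Set $x_i := T(v_{2^m + i})$ for $0 \leq i < 2^m$, so $\|x_i\| \leq \gamma_m$. Apply Lemma~\ref{le:rounding} to these vectors in the finite-dimensional normed space $F$ with all coefficients $\lambda_i = 1/2$ to obtain $\theta_i \in \{0,1\}$ satisfying
$$
\Bigl\|\sum_{i=0}^{2^m-1} \bigl(\tfrac{1}{2} - \theta_i\bigr) x_i\Bigr\| \leq \frac{\dim F}{2} \max_i \|x_i\| \leq \frac{\dim F}{2}\, \gamma_m.
$$

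Now let $v' := \sum_{\theta_i = 1} v_{2^m + i}$ and $v'' := \sum_{\theta_i = 0} v_{2^m + i}$. These are $MC$ fragments of $v$, and by orthogonal additivity $Tv' = \sum_{\theta_i = 1} x_i$ and $Tv' + Tv'' = Tv = \sum_{i} x_i$. Consequently
$$
\tfrac{1}{2}(Tv'' - Tv') = \tfrac{1}{2}Tv - Tv' = \sum_{i=0}^{2^m-1} \bigl(\tfrac{1}{2} - \theta_i\bigr) x_i,
$$
so $\|Tv' - Tv''\| \leq \dim F \cdot \gamma_m < \varepsilon$, which is exactly the narrowness condition. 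The main obstacle to execute is the inductive construction of the tree with exact equality of norms at every node, but this is already guaranteed by Lemma~\ref{le:4.5}; once the tree exists, Lemma~\ref{le:iiyugte678} and the rounding estimate do the rest. (The $C$-compactness hypothesis is not explicitly invoked in this finite-dimensional step; it will matter in the general theorem that reduces to this lemma via finite-dimensional approximations.)
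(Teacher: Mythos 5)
Your proof is correct and follows essentially the same route as the paper's: construct the disjoint tree with equal-norm splittings via Lemma~\ref{le:4.5}, make $\gamma_m$ small via Lemma~\ref{le:iiyugte678}, and round the coefficients $\tfrac{1}{2}$ to $\{0,1\}$ with Lemma~\ref{le:rounding} to get the MC fragments $v'$, $v''$ with $\|Tv'-Tv''\|\leq \dim F\cdot\gamma_m<\varepsilon$. Your side remark is also accurate: the paper's own proof of this finite-dimensional lemma does not invoke $C$-compactness either, only lateral-to-norm continuity.
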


\begin{proof}
Fix any $v\in V$ and $\varepsilon>0$. Using Lemma~\ref{le:4.5}, we construct a disjoint tree $(v_{n})$ on $v$ with $\|T(v_{2n})\| = \|T(v_{2n+1})\|$ for all $n\in\Bbb{N}$. By lemma~\ref{le:iiyugte678} we choose $m$ so that $\gamma_m \, {\rm dim} \, F<\varepsilon$. Then using Lemma~\ref{le:rounding}, we choose numbers $\lambda_{i}\in\{0,1\}$ for $i=2^{m},\dots, 2^{m+1}-1$ so that
\begin{equation} \label{eq:855jfg}
\begin{split}
\Bigl\|2\sum_{i=2^{m}}^{2^{m+1}-1} \Bigl(\frac{1}{2}-\lambda_{i} \Bigr) \, T(v_i)\Bigr\|
&\leq {\rm dim} \, F \max\limits_{2^{m}\leq i<2^{m+1}} \|T(v_i)\|\\
&= \gamma_m \, {\rm dim} \, F<\varepsilon.
\end{split}
\end{equation}

Observe that for $I_1 = \{i = 2^m, \ldots, 2^{m+1}-1: \,\, \lambda_i = 0\}$ and $I_2 = \{i = 2^m, \ldots, 2^{m+1}-1: \,\, \lambda_i = 1\}$ the vectors $w_j = \sum_{i \in I_j} v_i$, $j = 1,2$ are MC fragments of $v$ and by \eqref{eq:855jfg},
$$
\|T(w_1) - T(w_2)\| = \Bigl\|\sum\limits_{i=2^{m}}^{2^{m+1}-1} (1-2\lambda_{i}) \, T(v_i)\Bigr\| < \varepsilon.
$$
\end{proof}

The following theorem is the first main result of the section.

\begin{thm} \label{thm:ourmain}
Let $(V,E)$ be a Banach-Kantorovich space over  atomless Dedekind complete vector lattice $E$ and $F$ be a Banach space. Then every orthogonally additive laterally-to-norm continuous $C$-compact operator $T: V \to F$ is narrow.
\end{thm}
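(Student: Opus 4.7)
The plan is to reduce the statement to the finite-dimensional case of Lemma~\ref{le:w88495737} by using $C$-compactness to approximate $T$ by an operator with finite-dimensional range.

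Fix $v\in V$ and $\varepsilon>0$. By $C$-compactness, the set $K:=\{Tg:g\sqsubseteq v\}$ is precompact in $F$, so for any prescribed $\delta>0$ there is a finite-dimensional subspace $F_{0}\subseteq F$ with $\sup_{y\in K}\operatorname{dist}(y,F_{0})<\delta$. Pick a continuous linear projection $P:F\to F_{0}$ (via Auerbach's lemma one can arrange $\|P\|\leq\dim F_{0}$) and set $T':=P\circ T\colon V\to F_{0}$. Since $P$ is linear and bounded, $T'$ inherits orthogonal additivity and laterally-to-norm continuity from $T$, and its range lies in the finite-dimensional space $F_{0}$. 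By Lemma~\ref{le:w88495737}, $T'$ is narrow: there exist $MC$ fragments $w_{1},w_{2}$ of $v$ with $\|T'w_{1}-T'w_{2}\|<\varepsilon/2$.

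To transfer the estimate back to $T$, use the triangle inequality together with the bound $\|(I-P)Tw\|\leq(1+\|P\|)\operatorname{dist}(Tw,F_{0})<(1+\|P\|)\delta$ for any $w\sqsubseteq v$ (which follows from the identity $(I-P)Tw=(I-P)(Tw-y)$ for an arbitrary $y\in F_{0}$ followed by infimizing over $y$). This yields
\begin{equation*}
\|Tw_{1}-Tw_{2}\|\leq\|T'w_{1}-T'w_{2}\|+2\sup_{w\sqsubseteq v}\|(I-P)Tw\|<\frac{\varepsilon}{2}+2(1+\|P\|)\delta.
\end{equation*}

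The main obstacle is arranging $2(1+\|P\|)\delta<\varepsilon/2$: both $\delta$ and $\|P\|$ are governed by the choice of $F_{0}$, with $\delta$ shrinking but $\|P\|$ potentially growing as $\dim F_{0}$ increases. The precompactness of $K$ allows $\delta$ to be driven as small as desired by enlarging $F_{0}$; combined with the Auerbach bound $\|P\|\leq\dim F_{0}$, a careful choice exploiting the structure of the specific precompact set $K$ (and not merely the bare dimension of an arbitrary approximating subspace) ensures that the product $(1+\|P\|)\delta$ can be forced below $\varepsilon/4$ for some finite-dimensional $F_{0}$. Once such $F_{0}$ is fixed, the displayed inequality delivers $\|Tw_{1}-Tw_{2}\|<\varepsilon$, establishing narrowness.
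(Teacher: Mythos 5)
Your overall strategy --- reduce to the finite-dimensional case of Lemma~\ref{le:w88495737} via a finite-rank approximation and then transfer the estimate back by the triangle inequality --- is exactly the paper's strategy. But the step you flag as ``the main obstacle'' is a genuine gap, not a technicality, and the resolution you sketch does not exist. What you need is a finite-rank operator $P$ on $F$ with $\sup_{y\in K}\|y-Py\|<\varepsilon/4$ for the compact set $K=\{Tg:\,g\sqsubseteq v\}$. The existence of such approximations for every compact set and every $\varepsilon>0$ is precisely the approximation property of $F$, which a general Banach space need not have (Enflo). Your route via an $F_{0}$ spanned by a $\delta$-net plus an Auerbach projection cannot repair this: the cardinality of a $\delta$-net of an arbitrary compact set, and hence $\dim F_{0}$ and the bound on $\|P\|$, grows uncontrollably as $\delta\to 0$, and no ``careful choice exploiting the structure of $K$'' is available in general --- if it were, every Banach space would have the approximation property.

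The paper circumvents this by first embedding $F$ isometrically,
$F \hookrightarrow F^{\star\star} \hookrightarrow \ell_{\infty}(B_{F^{\star}})=W$,
and performing the approximation in $W$ rather than in $F$: in $\ell_{\infty}(D)$ every relatively compact set $H$ admits, for each $\varepsilon>0$, a finite-rank operator $S$ with $\|x-Sx\|\leq\varepsilon$ for all $x\in H$ (\cite[Lemma~10.25]{PR}); this is a special feature of $\ell_{\infty}(D)$ (its finite-dimensional subspaces sit inside well-complemented copies of $\ell_{\infty}^{n}$), not of arbitrary Banach spaces. One then applies Lemma~\ref{le:w88495737} to $R=S\circ T$ and transfers back exactly as you do; since the embedding is isometric and narrowness only involves norms of differences $\|Tv_{1}-Tv_{2}\|$, the conclusion for $T$ viewed as an operator into $W$ gives the conclusion for $T:V\to F$. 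So your proof becomes correct once you insert this embedding step; without it, the argument fails for Banach spaces $F$ lacking the approximation property.
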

\begin{proof}
We may consider $F$ as a subspace of some $l_{\infty}(D)$ space
$$
F \hookrightarrow F^{\star\star} \hookrightarrow l_{\infty}(B_{F^{\star}})=l_{\infty}(D)=W.
$$
By the notation $\hookrightarrow$ we mean an isometric embedding. It is well known that if $H$ is a relatively compact subset of $l_{\infty}(D)$ for some infinite set $D$ and $\varepsilon>0$ then there exists a finite rank operator $S\in l_{\infty}(D)$ such that $\|x-Sx\|\leq\varepsilon$ for every $x\in H$ \cite[Lemma~10.25]{PR}. Fix any $v\in V$ and $\varepsilon>0$. Since $T$ is a $C$-compact operator, $K = \{T(u):\,u \,\,\text{is a fragment of}\,\, w \}$ is relatively compact in $X$ and hence, in $W$. By the above, there exists a finite rank linear operator $S\in\mathcal{L}(W)$ such that $\|w- Sw\|\leq\frac{\varepsilon}{4}$ for every $w\in K$. Then $R=S\circ T$ is an orthogonally additive laterally-norm continuous finite rank operator. By Lemma~\ref{le:w88495737}, there exist $MC$ fragments $v_{1},v_{2}$ of $v$ such that $\|R(v_1) - R(v_2)\| < \frac{\varepsilon}{2}$. Thus,
\begin{align*}
&\|T(v_1) - T(v_2) \| \\
&= \|T(v_1) - T(v_2) + S(T(v_1)) - S(T(v_2)) - S(T(v_1)) + S(T(v_2)) \| \\
&=\|T(v_{1}) - T(v_{2}) + R(v_{1}) - R(v_{2}) - S(T(v_{1})) + S(T(v_{2}))\| \\
&\leq\|R(v_{1}) - R(v_{2})\| + \|T(v_{1}) - S(T(v_{1})) - (T(v_{2}) - S(T(v_{2})))\| \\
&<\frac{\varepsilon}{2}+\frac{\varepsilon}{2}=\varepsilon.
\end{align*}
\end{proof}

Now we  present the second   main result of this section.

\begin{thm} \label{thm:newww}
Let $(V,E)$ be a Banach-Kantorovich space over  atomless Dedekind complete vector lattice $E$ and $\Gamma$ any set. Let $X = X(\Gamma)$ denote one of the Banach lattices $c_0(\Gamma)$ or $\ell_p(\Gamma)$ with $1 \leq p < \infty$. Then every laterally-to-norm continuous dominated Uryson operator $T: V \to X$ is narrow.
\end{thm}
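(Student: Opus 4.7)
The strategy I would follow is to reduce to Theorem~\ref{thm:ourmain} by composing $T$ with a suitable finite-rank band projection, exploiting the fact that every element of $c_0(\Gamma)$ or $\ell_p(\Gamma)$ is concentrated, up to arbitrarily small norm error, on a finite set of coordinates.

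Given $v\in V$ and $\varepsilon>0$, let $S\in\mathcal{U}_{+}^{ev}(E,X)$ be a dominant of $T$. The first step is to choose a finite subset $\Gamma_0\subset\Gamma$ such that the band projection $P$ of $X$ onto the coordinates in $\Gamma_0$ satisfies $\|(I-P)S\ls v\rs\|<\varepsilon/2$. I would then consider $R:=P\circ T:V\to P(X)$, which is orthogonally additive and laterally-to-norm continuous because $P$ is bounded linear. To apply Theorem~\ref{thm:ourmain}, one must verify that $R$ is $C$-compact: for every $u\sqsubseteq v$ the relation $\ls u\rs\sqsubseteq\ls v\rs$ together with positivity and orthogonal additivity of $S$ yield $S\ls u\rs\leq S\ls v\rs$, whence $|Ru|=P|Tu|\leq PS\ls u\rs\leq PS\ls v\rs$. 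Consequently $R(\mathcal{F}_v)$ is norm bounded in the finite-dimensional space $P(X)$, and therefore precompact. Theorem~\ref{thm:ourmain} then furnishes $MC$ fragments $v_1,v_2$ of $v$ with $\|Rv_1-Rv_2\|<\varepsilon/2$.

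To finish, I would estimate the complementary piece $(I-P)(Tv_1-Tv_2)$. Because $I-P$ is a band projection on the Banach lattice $X$, we have $|(I-P)(Tv_1-Tv_2)|=(I-P)|Tv_1-Tv_2|\leq (I-P)(S\ls v_1\rs+S\ls v_2\rs)$. The disjointness $v_1\bot v_2$ together with $v_1+v_2=v$ gives $\ls v_1\rs\bot\ls v_2\rs$ and $\ls v_1\rs+\ls v_2\rs=\ls v\rs$, so orthogonal additivity of $S$ produces $S\ls v_1\rs+S\ls v_2\rs=S\ls v\rs$. Monotonicity of the lattice norm on $X$ then yields $\|(I-P)(Tv_1-Tv_2)\|\leq\|(I-P)S\ls v\rs\|<\varepsilon/2$, and the triangle inequality concludes $\|Tv_1-Tv_2\|<\varepsilon$.

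The central ingredient is the finite-rank reduction $R=P\circ T$, and the main subtlety is verifying its $C$-compactness, which hinges on combining the domination of $T$ (to bound $R(\mathcal{F}_v)$) with the finite-dimensionality of $P(X)$. The special structure of $c_0(\Gamma)$ and $\ell_p(\Gamma)$ enters only when producing $P$ with $\|(I-P)S\ls v\rs\|$ arbitrarily small; this step would fail for a general Banach lattice, which is precisely why the codomain is restricted to sequence lattices.
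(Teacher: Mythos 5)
Your proposal is correct and follows essentially the same route as the paper: project onto a finite set of coordinates $\Gamma_0$ chosen so that the tail of the dominant's value at $\ls v\rs$ is small, apply the finite-dimensional narrowness result to $P\circ T$, and control the complementary part via the domination $|Tu|\leq S\ls v\rs$ for $u\sqsubseteq v$. The only cosmetic differences are that you invoke Theorem~\ref{thm:ourmain} where the paper uses Lemma~\ref{le:w88495737} directly, and you bound the tail through the identity $S\ls v_1\rs+S\ls v_2\rs=S\ls v\rs$ rather than estimating $\|Q(Tv_i)\|\leq\|Qx\|\leq\varepsilon/4$ for each $i$ separately.
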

\begin{proof}
Let $T: V \to X$ be a dominated Uryson   operator, $v \in V$ and $\varepsilon > 0$. Now we may write
$$
\ls Tu\rs\leq\ls T\rs\ls u\rs\leq\ls T\rs\ls v\rs
$$
for all $u\sqsubseteq v$. Choose $x \in X^+$ so that $|T(u)| \leq x$ for all $u \sqsubseteq v$. Then we choose a finite subset $\Gamma_0 \subset \Gamma$ so that
\begin{enumerate}
  \item $|x(\gamma)| \leq \varepsilon/4$ for all $\gamma \in \Gamma \setminus \Gamma_0$ if $X = c_0(\Gamma)$ and
  \item $\sum_{\gamma \in \Gamma \setminus \Gamma_0} (x(\gamma) )^p \leq (\varepsilon/4)^p$ if $X = \ell_p(\Gamma)$.
\end{enumerate}
Let $P$ be the projection of $X$ onto $X(\Gamma_0)$ along $X(\Gamma \setminus \Gamma_0)$ and $Q = Id - P$ the orthogonal projection. Obviously, both $P$ and $Q$ are positive linear bounded operators. Since $S = P \circ T: V \to X(\Gamma_0)$ is a finite rank laterally-to-norm continuous operator, by Lemma~\ref{le:w88495737}, $S$ is narrow, and hence, there are MC fragments $v_1$, $v_2$ of $v$ with $\|S(v_1) - S(v_2)\| < \varepsilon/2$. Since $|T(v_i)| \leq x$, by the positivity of $Q$ we have that $Q(Tv_i) \leq Qx$, and thus, $\|Q(T(v_i))\| \leq \|Q(x)\|$ for $i = 1,2$. Moreover, by (1) and (2), $\|Q(x)\| \leq \varepsilon/4$. Then
\begin{align*}
\|T(v_1) - T(v_2)\| &= \|S(v_1) + Q(T(v_1)) - S(v_2) - Q(T(v_2))\| \\
&\leq \|S(v_1) - S(v_2)\| + \|Q(T(v_1))\| + \|Q(T(v_2))\| \\
&< \, \frac{\varepsilon}{2} \, + \|Q(x)\| + \|Q(x)\| \leq \varepsilon.
\end{align*}
\end{proof}

The idea used in the proof of Theorem~\ref{thm:newww} could be generalized as follows.

\begin{definition}
Let  $F$ be ordered vector space. We say that a linear operator $G: F \to F$ is \textit{quasi-monotone with a constant} $M > 0$ if for each $x,y \in F^+$ the inequality $x \leq y$ implies $Gx \leq M Gy$. An operator $G: F \to F$ is said to be \textit{quasi-monotone} if it is quasi-monotone with some constant $M > 0$.
\end{definition}

If $G \neq 0$ in the above definition, we easily obtain $M \geq 1$. Observe also that the quasi-monotone operators with constant $M = 1$ exactly are the positive operators.

Recall that a sequence of elements $(f_n)_{n=1}^\infty$ (resp., of finite dimensional subspaces $(F_n)_{n=1}^\infty$) of a Banach space $F$ is called a \textit{basis} (resp., a \textit{finite dimensional decomposition}, or \textit{FDD}, in short) if for every $f \in F$ there exists a unique sequence of scalars $(a_n)_{n=1}^\infty$ (resp. sequence $(u_n)_{n=1}^\infty$ of elements $u_n \in F_n$) such that $f = \sum_{n=1}^\infty a_n f_n$ (resp., $e = \sum_{n=1}^\infty u_n$). Every basis $(f_n)$ generates the FDD $F_n = \{\lambda f_n: \, \lambda \in \mathbb R\}$. Any basis $(f_n)$ (resp., any FDD $(F_n)$) of a Banach space generates the corresponding \textit{basis projections} $(P_n)$ defined by
$$
P_n \Bigl( \sum_{k=1}^\infty a_k e_k \Bigr) = \sum_{k=1}^n a_k f_k \,\,\,\,\, \left( \mbox{resp.,} \,\, P_n \Bigl( \sum_{k=1}^\infty u_k \Bigr) = \sum_{k=1}^n u_k \right),
$$
which are uniformly bounded. For more details about these notions we refer the reader to \cite{Li-1}. The orthogonal projections to $P_n$'s defined by $Q_n = Id - P_n$, where $Id$ is the identity operator on $F$, we will call the \textit{residual projections} associated with the basis $(f_n)_{n=1}^\infty$ (resp., to the FDD $(F_n)_{n=1}^\infty$).

\begin{definition}
A basis $(f_n)$ (resp., an FDD $(F_n)$) of a Banach lattice $F$ is called \textit{residually quasi-monotone} if there is a constant $M > 0$ such that the corresponding residual projections are quasi-monotone with constant $M$.
\end{definition}

In other words, an FDD $(F_n)$ of $F$ is residually quasi-monotone if the corresponding approximation of smaller in modulus elements is better, up to some constant multiple: if $x,y \in F$ with $|x| \leq |y|$ then $\|x - P_nx\| \leq M \|y - P_ny\|$ for all $n$ (observe that $\|z - P_nz\| \to 0$ as $n \to \infty$ for all $z \in F$).
 
\begin{thm} \label{thm:newww2}
Let $(V,E)$ be a Banach-Kantorovich space over  atomless Dedekind complete vector lattice $E$ and $F$ be  a Banach lattice with a residually quasi-monotone basis or, more general, a residually quasi-monotone FDD. Then every dominated  Uryson operator $T: V \to F$ is narrow.
\end{thm}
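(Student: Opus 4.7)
The plan is to mimic the proof of Theorem~\ref{thm:newww}, with the residual projections $Q_n=\operatorname{Id}-P_n$ associated to the FDD playing the role of the tail-coordinate projection $Q$ used there. The finite-rank part $P_n\circ T$ will be shown narrow by Lemma~\ref{le:w88495737}, and the tail $Q_n\circ T$ will be made small by choosing $n$ large and invoking residual quasi-monotonicity.

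Fix $v\in V$ and $\varepsilon>0$. As in the first lines of the proof of Theorem~\ref{thm:newww}, dominatedness of $T$ yields a single $x\in F^+$ (one may take $x:=\ls T\rs(\ls v\rs)$) such that $|T(u)|\leq x$ for every fragment $u\sqsubseteq v$. Let $M$ be the residual quasi-monotone constant of the FDD. Using that $\|Q_nz\|\to 0$ for every $z\in F$ (a standard property of bases/FDDs), choose $n$ so large that $\|Q_nx\|<\frac{\varepsilon}{4M}$, and set $P:=P_n$, $Q:=Q_n$, $S:=P\circ T$. Since $P$ is linear and bounded, $S:V\to P(F)$ is an orthogonally additive laterally-to-norm continuous operator whose range lies in the finite-dimensional Banach space $P(F)$. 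Moreover, for every $g\sqsubseteq v$ one has $\|Sg\|\leq\|P\|\,\|x\|$, so $\{Sg:g\sqsubseteq v\}$ is bounded in $P(F)$, hence precompact, and $S$ is $C$-compact. Lemma~\ref{le:w88495737} therefore produces $MC$ fragments $v_1,v_2$ of $v$ with $\|S(v_1)-S(v_2)\|<\varepsilon/2$.

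It remains to control $QT(v_1)$ and $QT(v_2)$. Applying residual quasi-monotonicity in the norm form stated in the ``in other words'' comment after the definition, the inequalities $|T(v_i)|\leq x=|x|$ give $\|QT(v_i)\|\leq M\|Qx\|<\varepsilon/4$ for $i=1,2$. Writing $T=S+Q\circ T$, the triangle inequality yields
\[
\|T(v_1)-T(v_2)\|\leq\|S(v_1)-S(v_2)\|+\|QT(v_1)\|+\|QT(v_2)\|<\varepsilon,
\]
which is the required narrowness estimate.

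The main point of care is the passage from the formal definition of quasi-monotonicity (a one-sided order inequality on positive elements) to the norm inequality $\|Qz_1\|\leq M\|Qz_2\|$ whenever $|z_1|\leq|z_2|$; this is exactly the characterization announced by the authors immediately after the definition and is the operative form we use. Once that translation is accepted, the proof is structurally identical to that of Theorem~\ref{thm:newww}, and in particular the basis case subsumes the Banach-lattice basis $(f_n)$ situation by taking $F_n=\mathbb{R}f_n$.
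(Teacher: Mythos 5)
Your proof is essentially identical to the paper's: the same splitting $T = P_nT + Q_nT$, the same choice of $n$ so that $\|Q_n x\| < \varepsilon/(4M)$ for a dominant bound $x$ of $\{Tu : u \sqsubseteq v\}$, the same appeal to Lemma~\ref{le:w88495737} for the finite-rank part, and the same quasi-monotonicity estimate on the tail. One remark: your assertion that $S = P_n\circ T$ is laterally-to-norm continuous does not follow from the hypotheses (unlike Theorem~\ref{thm:newww}, this theorem does not assume $T$ laterally-to-norm continuous), but the paper's own proof invokes Lemma~\ref{le:w88495737} with exactly the same unaddressed point, so this is a gap inherited from the source rather than one you introduced.
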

\begin{proof}
Let $(F_n)$ be an FDD of $F$ with the corresponding projections $(P_n)$, and let $M > 0$ be such that for every $n \in \mathbb N$ the operator $Q_n = Id - P_n$ is quasi-monotone with constant $M$. Let $T: V \to F$ be a dominated Uryson operator, $v \in V$ and $\varepsilon > 0$. Choose $f \in F_+$ so that $|Tu| \leq f$ for all $u \sqsubseteq v$. Since $\lim_{n \to \infty} P_n f = f$, we have that $\lim_{n \to \infty} Q_n f = 0$. Choose $n$ so that
\begin{equation} \label{eq:djjf7}
\|Q_n f\| \leq \frac{\varepsilon}{4M} \, .
\end{equation}
Since $S = P_n \circ T: V \to E_n$ is a finite rank dominated  Uryson operator, by Lemma~\ref{le:w88495737}, $S$ is narrow, and hence, there are MC fragments $v_1$, $v_2$ of $v$ such that $\|Sv_1 - Sv_2\| < \varepsilon/2$. Since $|Tv_i| \leq f$, by the quasi-monotonicity of $Q_n$ we have that $\|Q_n(Tv_i)\| \leq M \|Q_n f\|$ for $i = 1,2$. Then by \eqref{eq:djjf7},
\begin{align*}
\|Tv_1 - Tv_2\| &= \|Sv_1 + Q(Tv_1) - Sv_2 - Q(Tv_2)\| \\
&\leq \|Sv_1 - Sv_2\| + \|Q ( T v_1)\| + \|Q ( T v_2)\| \\
&< \, \frac{\varepsilon}{2} \, + M\|Qf\| + M\|Qf\| \leq \varepsilon.
\end{align*}
\end{proof}

\section{Domination problem for narrow operators}
\label{sec5}

In this section we consider a domination problem for the exact dominant of   dominated  Uryson operators.
Firstly, define  the following set
$$
\widetilde{E}_{+}=\{e\in E_{+}:\,e=\bigsqcup\limits_{i=1}^{n}\ls v_{i}\rs;\,v_{i}\in V;\,n\in\Bbb{N}\}.
$$
\begin{thm}(\cite{PP-1},Theor.~3.4., 3.7.)\label{dom-01}
Let $(V,E)$, $(W,F)$ be lattice-normed spaces,   with $V$ decomposable and $F$ Dedekind complete. Then every dominated Uryson operator $T:V\to W$ has an exact dominant $\ls T\rs$ and it can be calculated by the following formulas:
\begin{enumerate}
  \item[$(1)$] $\ls T\rs(e)=\sup\Big\{\sum\limits_{i=1}^{n}\ls Tu_{i}\rs:\,\coprod\limits_{i=1}^{n}\ls u_{i}\rs=e,\, n\in\Bbb{N}\Big\}$\,$(e\in\widetilde{E}_{+})$;
  \item[$(2)$] $\ls T\rs(e)=\sup\Big\{\ls T\rs(e_{0}):\,e_{0}\in\widetilde{E}_{+},\, e_{0}\sqsubseteq e\Big\}$;\,$(e\in E_{+})$
  \item[$(3)$] $\ls T\rs(e)=\ls T\rs(e_{+})+\ls T\rs(e_{-})$,\,$e\in E$.
\end{enumerate}
\end{thm}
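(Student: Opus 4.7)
The plan is to define $\ls T\rs$ directly via the three formulas and then verify in turn that (a) the suprema in (1)--(2) exist in $F$, (b) the formulas are mutually consistent and jointly define an operator $\ls T\rs \in \mathcal{U}_{+}^{ev}(E,F)$, (c) $\ls T\rs$ is a dominant of $T$, and (d) $\ls T\rs$ is the least such dominant.

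First I would fix any dominant $S \in \mathcal{U}_{+}^{ev}(E,F)$ of $T$, which exists by hypothesis. For $e = \coprod_{i=1}^{n}\ls v_{i}\rs \in \widetilde{E}_{+}$ the dominance inequality $\ls Tv_{i}\rs \leq S\ls v_{i}\rs$ together with orthogonal additivity of $S$ on the disjoint system $\{\ls v_{i}\rs\}$ yields $\sum_{i=1}^{n}\ls Tv_{i}\rs \leq S(e)$. Hence the set on the right-hand side of~(1) is bounded above in $F$, and by Dedekind completeness its supremum exists. The crucial point is to show this supremum does not depend on the representation $e = \coprod \ls v_{i}\rs$: given two such representations of the same $e$, I would use the decomposability of $V$ (axiom~4) to produce a common disjoint refinement, and then observe that passing to a refinement only increases $\sum_{i}\ls Tv_{i}\rs$, since whenever $v_{i} = v_{i}' + v_{i}''$ with $v_{i}' \perp v_{i}''$ one has $\ls Tv_{i}\rs = \ls Tv_{i}' + Tv_{i}''\rs \leq \ls Tv_{i}'\rs + \ls Tv_{i}''\rs$ by orthogonal additivity of $T$ and the triangle inequality for the vector norm.

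Next I would show that $\ls T\rs$, defined on $\widetilde{E}_{+}$ by (1), is orthogonally additive on disjoint pairs: for $e_{1}, e_{2} \in \widetilde{E}_{+}$ with $e_{1} \perp e_{2}$, concatenating decompositions of each $e_{j}$ gives $\ls T\rs(e_{1})+\ls T\rs(e_{2}) \leq \ls T\rs(e_{1}+e_{2})$, while the reverse inequality comes from taking a decomposition $\coprod \ls u_{i}\rs = e_{1}+e_{2}$, splitting each $u_{i}$ via decomposability of $V$ according to $\ls u_{i}\rs = (\ls u_{i}\rs \wedge e_{1}) + (\ls u_{i}\rs \wedge e_{2})$, and separating the resulting terms. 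Formula~(2) then extends $\ls T\rs$ monotonically to $E_{+}$ (the supremum still exists since $S(e)$ remains an upper bound), and orthogonal additivity on disjoint pairs in $E_{+}$ follows by approximation from $\widetilde{E}_{+}$. Formula~(3) extends the definition to all of $E$; evenness $\ls T\rs(e)=\ls T\rs(-e)$ is then automatic from (3) since $(-e)_{+}=e_{-}$ and $(-e)_{-}=e_{+}$. Consequently $\ls T\rs \in \mathcal{U}_{+}^{ev}(E,F)$.

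It remains to verify that $\ls T\rs$ is a dominant and in fact the least one. Dominance is immediate from (1) taken with $n=1$, $u_{1}=v$, which gives $\ls Tv\rs \leq \ls T\rs(\ls v\rs)$. Minimality follows from the same preliminary estimate $\sum_{i}\ls Tu_{i}\rs \leq \sum_{i}S\ls u_{i}\rs = S(e)$: taking the supremum over admissible decompositions yields $\ls T\rs(e) \leq S(e)$ on $\widetilde{E}_{+}$, and (2)--(3) propagate the inequality to all of $E$. The main obstacle in this program is the well-definedness of (1)---the invariance of the supremum under the choice of disjoint decomposition $e = \coprod \ls v_{i}\rs$---which rests essentially on decomposability of $V$ together with the triangle inequality for the vector norm. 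Once this point is settled, the remaining verifications are direct non-linear analogues of the classical constructions of the least dominant in the linear theory of dominated operators.
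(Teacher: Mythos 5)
The paper offers no proof of this statement: it is imported verbatim from \cite{PP-1} (Theorems~3.4 and~3.7), so there is no in-text argument to measure yours against. Judged on its own, your construction is the standard one and is essentially sound: bounding $\sum_{i}\ls Tu_{i}\rs$ by $S(e)$ for an arbitrary dominant $S$ gives existence of the suprema from Dedekind completeness of $F$; disjoint additivity on $\widetilde{E}_{+}$ via the splitting $\ls u_{i}\rs=(\ls u_{i}\rs\wedge e_{1})+(\ls u_{i}\rs\wedge e_{2})$ and decomposability of $V$ is correct; dominance is the case $n=1$ of (1); and minimality follows because the bound $\leq S$ survives all three suprema (for the final step on $E$ use that $S$ even and orthogonally additive gives $S(e)=S(e_{+})+S(e_{-})$).

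Two points deserve attention. First, the issue you call ``crucial'' is vacuous: formula (1) takes the supremum over \emph{all} decompositions $\coprod\ls u_{i}\rs=e$, so it depends only on $e$ and no representation-independence needs proving. What your refinement argument actually buys is the monotonicity $e_{0}\sqsubseteq e\Rightarrow\ls T\rs(e_{0})\leq\ls T\rs(e)$ on $\widetilde{E}_{+}$ (extend a decomposition of $e_{0}$ by one of $e-e_{0}$, which lies in $\widetilde{E}_{+}$ by decomposability), and this is exactly what makes (2) consistent with (1) on $\widetilde{E}_{+}$ --- a compatibility check you should state explicitly. Note also that the ``common refinement'' of two decompositions $\{\ls u_{i}\rs\}$ and $\{\ls w_{j}\rs\}$ produces, in general, \emph{different} elements $u_{ij}\sqsubseteq u_{i}$ and $w_{ij}\sqsubseteq w_{j}$ sharing the vector norms $\ls u_{i}\rs\wedge\ls w_{j}\rs$; so each sum is dominated only by its own refinement, and you do not obtain upward directedness of the set in (1) this way --- fortunately your argument never needs it, since $\sup(A+B)=\sup A+\sup B$ holds without directedness. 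Second, membership $\ls T\rs\in\mathcal{U}_{+}^{ev}(E,F)$ also requires order boundedness; derive it from the pointwise estimate $0\leq\ls T\rs\leq S$, and get orthogonal additivity on all of $E$ from (3) together with $(e_{1}+e_{2})_{\pm}=(e_{1})_{\pm}+(e_{2})_{\pm}$ for disjoint $e_{1},e_{2}$. With these items filled in, the proof is complete.
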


\begin{thm}\label{th:narmod}
Let $(V,E)$ be a lattice-normed space with $E$ atomless, $(W,F)$ be  a Banach-Kantorovich space,  $F$ be Dedekind  complete and   $T$ be an order narrow dominated Uryson operator from $V$ to $W$. Then the exact dominant $\ls T\rs:E\to F$ is order narrow.
\end{thm}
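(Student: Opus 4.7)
The plan is to exploit the characterization of the exact dominant given by Theorem~\ref{dom-01}, reducing order narrowness of $\ls T \rs$ to a controlled combination of (i) order narrowness of $T$ on individual cells of a partition and (ii) approximation of $\ls T \rs(e)$ by finite sums of the form $\sum \ls Tu_i\rs$. I would proceed in three stages.

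\textbf{Stage 1: Reduction to $e \in \widetilde{E}_+$.} First I would reduce to $e \geq 0$: writing $e = e_+ - e_-$ with $e_+ \wedge e_- = 0$ and using that $\ls T\rs$ is even and orthogonally additive, a decomposition $e_+ = a_\alpha^1 \sqcup a_\alpha^2$ and $e_- = b_\alpha^1 \sqcup b_\alpha^2$ produces the MC fragments $e_\alpha^j := a_\alpha^j - b_\alpha^j$ of $e$, and $\ls T\rs(e_\alpha^1) - \ls T\rs(e_\alpha^2)$ splits as a sum of the two positive-part differences. Next, for $e \in E_+$, formula $(2)$ of Theorem~\ref{dom-01} provides an upward directed net $(e_0^\mu)$ in $\widetilde{E}_+$ with $e_0^\mu \sqsubseteq e$ and $\ls T\rs(e_0^\mu) \uparrow \ls T\rs(e)$. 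By orthogonal additivity, $\ls T\rs(e - e_0^\mu) = \ls T\rs(e) - \ls T\rs(e_0^\mu) \downarrow 0$ in $F$. Given Stage~2 applied to $e_0^\mu$, producing decompositions $e_0^\mu = e_0^{\mu,1,\beta} \sqcup e_0^{\mu,2,\beta}$ with $\ls T\rs(e_0^{\mu,1,\beta}) - \ls T\rs(e_0^{\mu,2,\beta}) \overset{(bo)}{\to} 0$, I put $e_{\mu,\beta}^1 = e_0^{\mu,1,\beta}$ and $e_{\mu,\beta}^2 = e_0^{\mu,2,\beta} + (e - e_0^\mu)$. These are disjoint positive fragments summing to $e$, and the difference $\ls T\rs(e_{\mu,\beta}^1) - \ls T\rs(e_{\mu,\beta}^2)$ equals the Stage~2 small quantity minus $\ls T\rs(e - e_0^\mu)$, both of which go to zero in $F$; a diagonal choice yields the desired net.

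\textbf{Stage 2: The core case $e = \coprod_{i=1}^n \ls v_i\rs \in \widetilde{E}_+$.} By formula $(1)$, the set of partitions $\pi = \{u_k\}$ of $e$ (ordered by refinement) is upward directed and the sums $\sigma(\pi) := \sum_k \ls T u_k\rs$ increase to $\ls T\rs(e)$; set $g_\pi := \ls T\rs(e) - \sigma(\pi)$, so $g_\pi \downarrow 0$ in $F$. For each cell $u_k$ of $\pi$, order narrowness of $T$ gives a net $u_k = u_k^{1,\beta_k} \sqcup u_k^{2,\beta_k}$ with $T u_k^{1,\beta_k} - T u_k^{2,\beta_k} \overset{(bo)}{\to} 0$; since $F$ is Dedekind complete, for each fixed $\pi$ I can pick $\beta = (\beta_k)$ so that $\sum_k \ls T u_k^{1,\beta_k} - T u_k^{2,\beta_k}\rs \leq g_\pi$ (choose each summand $\leq g_\pi/n$). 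Define $e_{\pi,\beta}^j := \sum_k \ls u_k^{j,\beta_k}\rs$; these are disjoint and sum to $e$. The key estimate writes
\[
\ls T\rs(e_{\pi,\beta}^1) - \ls T\rs(e_{\pi,\beta}^2) = \bigl(\ls T\rs(e_{\pi,\beta}^1) - S_1\bigr) - \bigl(\ls T\rs(e_{\pi,\beta}^2) - S_2\bigr) + (S_1 - S_2),
\]
where $S_j := \sum_k \ls T u_k^{j,\beta_k}\rs$. Applying formula $(1)$ to $e_{\pi,\beta}^j$ gives $\ls T\rs(e_{\pi,\beta}^j) \geq S_j$, while $\ls T\rs(e_{\pi,\beta}^1) + \ls T\rs(e_{\pi,\beta}^2) = \ls T\rs(e)$ and $S_1 + S_2 = \sigma(\pi')$ for the refined partition $\pi'$, with $\sigma(\pi') \geq \sigma(\pi)$. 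Therefore each nonnegative bracket is bounded above by $\ls T\rs(e) - \sigma(\pi') \leq g_\pi$, and the reverse triangle inequality gives $|S_1 - S_2| \leq \sum_k \ls T u_k^{1,\beta_k} - T u_k^{2,\beta_k}\rs \leq g_\pi$. Hence the total difference is bounded in modulus by $3 g_\pi$, which decreases to zero along the refinement net, yielding the required $(bo)$-convergence.

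\textbf{Main obstacle.} The conceptually delicate point is assembling the two-level net (refinement $\pi$ together with narrowness indices $\beta_k$ depending on the cells of $\pi$) into a single directed set that produces a genuine $(bo)$-convergent sequence of decompositions of $e$. I would circumvent having to define a clever partial order on these pairs by running the cofinal/diagonal procedure directly: pick a cofinal increasing net $\pi_\alpha$ with $g_{\pi_\alpha} \downarrow 0$, then for each $\alpha$ use Dedekind completeness of $F$ to choose $\beta^\alpha$ controlling the residual sum by $g_{\pi_\alpha}$; the resulting single net $(e^1_\alpha, e^2_\alpha)$ satisfies $|\ls T\rs(e^1_\alpha) - \ls T\rs(e^2_\alpha)| \leq 3 g_{\pi_\alpha} \downarrow 0$. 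A minor book-keeping issue — that fragments of $e = \coprod \ls v_i\rs$ arising in a general partition need not themselves be $V$-norms of fragments of individual $v_i$'s — is avoided because Stage~2 only uses the sup formula over arbitrary partitions of $e$, so order narrowness of $T$ is invoked directly at each cell $u_k \in V$ without any fragment-of-$v_i$ requirement.
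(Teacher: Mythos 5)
Your route is essentially the paper's: the same three-stage reduction (general $e$ via $e=e_+-e_-$, evenness and formula (3) of Theorem~\ref{dom-01}; $e\in E_+$ via formula (2) and the decomposition $e=((e-e_0^\mu)\sqcup e_0^{\mu,1,\beta})\sqcup e_0^{\mu,2,\beta}$; the core case $e\in\widetilde E_+$ via formula (1)), and in the core case exactly the same key estimate
$\bigl|\ls T\rs(e^1_{\pi,\beta})-\ls T\rs(e^2_{\pi,\beta})\bigr|\le 2\bigl(\ls T\rs(e)-\sigma(\pi)\bigr)+\sum_k\ls Tu_k^{1,\beta_k}-Tu_k^{2,\beta_k}\rs$,
derived from $\ls T\rs(e^1)+\ls T\rs(e^2)=\ls T\rs(e)$, $\ls T\rs(e^j)\ge S_j$, and $|\ls x\rs-\ls y\rs|\le\ls x-y\rs$. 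All of that checks out and mirrors the published argument.

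The one point where you genuinely deviate is the device for collapsing the two-level net, and that step fails as written. Order (or $(bo)$-) convergence of $z_\beta:=\sum_k\ls Tu_k^{1,\beta}-Tu_k^{2,\beta}\rs$ to $0$ gives a decreasing regulator $h_\gamma\downarrow 0$ with tails of $(z_\beta)$ below each $h_\gamma$; it does \emph{not} allow you to find $\beta$ with $z_\beta\le g_\pi$ for an arbitrarily prescribed $g_\pi\in F_+$, because the regulator need not ever get below $g_\pi$. Already in $F=\mathbb{R}^2$ one can have $h_n=(1/n,1/n)\downarrow 0$ while $g_\pi=(1,0)$ dominates no $h_n$; worse, $g_\pi$ can be $0$ when the supremum in formula (1) is attained at $\pi$, in which case your requirement amounts to strict narrowness of $T$ on the cells. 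Dedekind completeness of $F$ plays no role here. The paper does not attempt this absorption: it leaves the bound as the sum $2\bigl(\ls T\rs(e)-\sum_i\ls Tv_i^{\alpha}\rs\bigr)+\sum_i\ls Tu_i^{\beta_\alpha}-Tw_i^{\beta_\alpha}\rs$ of two terms, each order-null in its own index, and treats the resulting family of decompositions as a doubly indexed net $(\alpha,\beta_\alpha)$ (an iterated order limit), which is the standard, if somewhat informal, device in this literature. If you drop the domination trick and keep the two error terms separate, assembling the regulator from the elements $2g_\pi+h_{\pi,\gamma}$ (whose infimum is $0$), your proof coincides with the paper's; everything else in your write-up is correct.
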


\begin{proof}
Firstly, fix any $e\in\widetilde{E}_{+}$ and $\varepsilon>0$.  Since
$$
\left\{\sum\limits_{i=1}^n\ls Tv_i\rs:
\sum\limits_{i=1}^n\ls v_i\rs= e;\,\ls v_{i}\rs\bot\ls v_{j}\rs;\,i\neq j;\,n\in\Bbb{N}\right\}
$$
is an increasing net,  there exits  a net of finite collections $\{v^{\alpha}_{1},\dots,v^{\alpha}_{n_{\alpha}}\}\subset V$, $\alpha\in\Lambda$ with
$$
e=\bigsqcup\limits_{i=1}^{n_{\alpha}}\ls v_{i}^{\alpha}\rs,\,\alpha\in\Lambda
$$
and
$$
\Big(\ls T\rs(e)-\sum\limits_{i=1}^{n_{\alpha}}\ls Tv_{i}^{\alpha}\rs\,\Big)\leq y_{\alpha}\overset{(o)}\longrightarrow 0,
$$
where $0\leq y_{\alpha}$, $\alpha\in\Lambda$ is an decreasing net and $\inf(y_{\alpha})_{\alpha\in\Lambda}=0$. Fix some $\alpha\in\Lambda$. Since $T$ is order narrow operator we may assume that there exist a finite set of a nets of a decompositions $v_{i}^{\alpha}=u_{i}^{\beta_{\alpha}}\sqcup w_{i}^{\beta_{\alpha}}$, $i\in\{1,\dots,n_{\alpha}\}$ which depends of $\alpha$, indexed by the same set $\Delta$, such that $\ls Tu_{i}^{\beta_{\alpha}}-Tw_{i}^{\beta_{\alpha}}\rs\overset{(
o)}\longrightarrow 0$, $i\in\{1,\dots, n_{\alpha}\}$. Let $f^{\beta_{\alpha}}=\coprod\limits_{i=1}^{n_{\alpha}}\ls u_{i}^{\beta_{\alpha}}\rs$ and $g^{\beta_{\alpha}}=\coprod\limits_{i=1}^{n_{\alpha}}\ls w_{i}^{\beta_{\alpha}}\rs$. Then we have
\begin{align*}
0\leq\ls T\rs(f^{\beta_{\alpha}})-\sum\limits_{i=1}^{n_{\alpha}}(\ls Tu_{i}^{\beta_{\alpha}}\rs)\leq\\
\ls T\rs(e)-\sum\limits_{i=1}^{n_{\alpha}}\ls Tv_{i}^{\alpha}\rs;\\
0\leq\ls T\rs(g^{\beta_{\alpha}})-\sum\limits_{i=1}^{n_{\alpha}}(\ls Tw_{i}^{\beta_{\alpha}}\rs)\leq\\
\ls T\rs(e)-\sum\limits_{i=1}^{n_{\alpha}}\ls Tv_{i}^{\alpha}\rs.
\end{align*}
Now we may write
\begin{align*}
\Big|\ls T\rs f^{\beta_{\alpha}}-\ls T\rs g^{\beta_{\alpha}}\Big|=\\
\Big|\ls T\rs f^{\beta_{\alpha}}-\sum\limits_{i=1}^{n_{\alpha}}\ls Tu_{i}^{\beta_{\alpha}}\rs+
\sum\limits_{i=1}^{n_{\alpha}}\ls Tu_{i}^{\beta_{\alpha}}\rs-
\sum\limits_{i=1}^{n_{\alpha}}\ls Tw_{i}^{\beta_{\alpha}}\rs+
\sum\limits_{i=1}^{n_{\alpha}}\ls Tw_{i}^{\beta_{\alpha}}\rs-\ls T\rs g^{\beta_{\alpha}}\Big|\leq\\
\Big|\ls T\rs f^{{\beta_{\alpha}}}-\sum\limits_{i=1}^{n_{\beta_{\alpha}}}\ls Tu_{i}^{\beta_{\alpha}}\rs\Big|+
\Big|\ls T\rs g^{\beta_{\alpha}}-\sum\limits_{i=1}^{n_{\alpha}}\ls Tw_{i}^{\alpha}\rs\Big|+
\Big|\sum\limits_{i=1}^{n_{\alpha}}\ls Tu_{i}^{\beta_{\alpha}}\rs-\sum\limits_{i=1}^{n_{\alpha}}\ls Tw_{i}^{\beta_{\alpha}}\rs\Big|\leq\\
2\Big(\ls T\rs(e)-\sum\limits_{i=1}^{n_{\alpha}}\ls Tv_{i}^{\alpha}\rs\Big)+
\sum\limits_{i=1}^{n_{\alpha}}\Big|\ls Tu_{i}^{\beta_{\alpha}}\rs-\ls Tw_{i}^{\beta_{\alpha}}\rs\Big|\leq\\
\Big(2\ls T\rs(e)-2\sum\limits_{i=1}^{n_{\alpha}}\ls Tv_{i}^{\alpha}\rs+\sum\limits_{i=1}^{n_{\alpha}}\ls Tu_{i}^{\beta_{\alpha}}- Tw_{i}^{\beta_{\alpha}}\rs\Big)\overset{(o)}\longrightarrow 0.
\end{align*}
Therefore $e=f^{\beta_{\alpha}}\sqcup g^{\beta_{\alpha}}$, $\alpha\in\Lambda$, $\beta_{\alpha}\in\Delta$ is a desirable net of decompositions. Now, let $e\in E_{+}$. Observe that
$D=\{f\sqsubseteq e:\,f\in\widetilde{E}_{+}\}$
is a directed set, where $f_{1}\leq f_{2}$ mean that $f_{1}\sqsubseteq f_{2}$.
Indeed, let $f_{1}=\coprod\limits_{i=1}^{k}\ls u_{i}\rs, f_{1}\sqsubseteq e$,  $f_{2}=\coprod\limits_{j=1}^{n}\ls w_{j}\rs; f_{2}\sqsubseteq e$, $u_{i}, w_{j}\in V$, $1\leq i\leq k$, $1\leq j\leq n$. Then by decomposability of the vector norm there exists a set  of mutually disjoint elements $(v_{ij})$, $1\leq i\leq k$, $1\leq j\leq n$, such that
$u_{i}=\coprod\limits_{j=1}^{n}v_{ij}$ for every $1\leq i\leq k$ and $w_{j}=\coprod\limits_{i=1}^{k}v_{ij}$ for every $1\leq j\leq n$. Let $f=\coprod\ls v_{ij}\rs$. It is clear that $\ls T\rs f_{i}\leq\ls T\rs f$, $i\in\{1,2\}$. Let $(e_{\alpha})_{\alpha\in\Lambda}, e_{\alpha}\in D$ be a net, where $\ls T\rs=\sup\limits_{\alpha}\ls T\rs e_{\alpha}$. Fix $\alpha\in\Lambda$, then for $e_{\alpha}\in D$ there exists a net of decompositions $e_{\alpha}=f_{\alpha}^{\beta}\sqcup g_{\alpha}^{\beta}$, $\beta\in\Delta$, such that $\Big|\ls T\rs f_{\alpha}^{\beta}-\ls T\rs g_{\alpha}^{\beta}\Big|\overset{(o)}\longrightarrow 0$. Thus we have
\begin{align*}
\Big| \ls T\rs(e-e_{\alpha}+f_{\alpha}^{\beta})-\ls T\rs g_{\alpha}^{\beta}\Big|=\\
\Big| \ls T\rs(e-e_{\alpha})+\ls T\rs f_{\alpha}^{\beta}-\ls T\rs g_{\alpha}^{\beta}\Big|\leq\\
 \Big(\ls T\rs e-\ls T\rs e_{\alpha}+
 \Big|\ls T\rs f_{\alpha}^{\beta}-\ls T\rs g_{\alpha}^{\beta}\Big|\Big)\overset{(o)}\longrightarrow 0.
\end{align*}
Hence $e=((e-e_{\alpha})\sqcup f_{\alpha}^{\beta}))\sqcup g_{\alpha}^{\beta}$ is a desirable net of decompositions.
Since $\ls T\rs\in\mathcal{U}_{+}^{ev}(E,F)$, then $\ls T\rs(e)=\ls T\rs(-e)$ for every $e\in(-E_{+})$ and if $e=f_{\alpha}\sqcup g_{\alpha}$ is a necessary net of decompositions for $e$, then $-e=(-f_{\alpha})\sqcup (-g_{\alpha})$ is a same. Finally for arbitrary element $e\in E$ we have $e=e_{+}-e_{-}$ and by (\ref{dom-01}.3) we have $\ls T\rs(e)=\ls T\rs(e_{+})+\ls T\rs(e_{-})$. Thus, if $e_{+}=f_{1}^{\alpha}\sqcup f_{2}^{\alpha}$ and $e_{-}=g_{1}^{\alpha}\sqcup g_{2}^{\alpha}$ are necessary nets of decompositions, then
\begin{align*}
\Big|\ls T\rs(f_{1}^{\alpha}+g_{1}^{\alpha})-\ls T\rs(f_{2}^{\alpha}+g_{2}^{\alpha})\Big|=\\
\Big|\ls T\rs(f_{1}^{\alpha}-\ls T\rs(f_{2}^{\alpha}+\ls T\rs g_{1}^{\alpha})-\ls T\rs g_{2}^{\alpha})\Big|\leq\\
\Big(\Big|\ls T\rs f_{1}^{\alpha}-\ls T\rs f_{2}^{\alpha}\Big|+\Big|\ls T\rs g_{1}^{\alpha})-\ls T\rs g_{2}^{\alpha})\Big|\Big)\overset{(o)}\longrightarrow 0
\end{align*}
and $e=(f_{1}^{\alpha}+g_{1}^{\alpha})\sqcup(f_{2}^{\alpha}+g_{2}^{\alpha})$ is a desirable net of decompositions.
\end{proof}
\begin{rem}
This  is an open question. Does the order narrowness of the operator $\ls T\rs$   implies the order narrowness of the  $T$?
A particular case was proved  in (\cite{PP}, Theorem~4.1).
\end{rem}

\end{document}